\apptocmd{\thebibliography}{\raggedright}{}{} 
\newtheoremstyle{one}
{11pt}
{11pt}
{\it}
{}
{\bf}
{.}
{1mm}
{}
\newtheoremstyle{two}
{11pt}
{11pt}
{}
{}
{\bf}
{.}
{1mm}
{}
\theoremstyle{one}
\newtheorem{theorem}{Theorem}[section]
\newtheorem{lemma}[theorem]{Lemma}
\newtheorem{proposition}[theorem]{Proposition}
\theoremstyle{two}
\newtheorem{question}[theorem]{Question}
\newtheorem{definition}[theorem]{Definition}
\newtheorem{example}[theorem]{Example}
\newtheorem{notation}[theorem]{Notation}
\newtheorem{remark}[theorem]{Remark}
\newtheorem*{proofofthmtorsion}{\textbf{Proof of Theorem \ref{thmtorsion}}}
\newtheorem*{proofofthm8lines}{\textbf{Proof of Theorem \ref{thm: case8lines}}}
\newtheorem*{questions}{Questions}
\def\Q{\mathbb{Q}}
\def\P{\mathbb{P}}
\def\E{\mathscr{E}}
\title{Torsion points and concurrent exceptional curves on del Pezzo surfaces of degree one}
\author{Julie Desjardins and Rosa Winter}\date{}
\begin{document}
\maketitle

\begin{center}\textbf{Keywords:}  del Pezzo surfaces of degree 1, torsion points, elliptic curve, \\ rational elliptic surface, exceptional curves\end{center}

\abstract{The blow-up of the anticanonical base point on a del Pezzo surface $S$ of degree 1 gives rise to a rational elliptic surface $\E$ with only irreducible fibers. The sections of minimal height of $\E$ are in correspondence with the $240$ exceptional curves on $S$. A natural question arises when studying the configuration of these curves: if a point on $S$ is contained in `many' exceptional curves, is it torsion on its fiber on $\E$? In 2005, Kuwata proved for the analogous question on del Pezzo surfaces of degree $2$, where there are 56 exceptional curves, that if `many' equals $4$ or more, the answer is yes. In this paper, we prove that for del Pezzo surfaces of degree 1, the answer is yes if `many' equals $9$ or more. Moreover, we give counterexamples where a \textsl{non}-torsion point lies in the intersection of $7$ exceptional curves. We give partial results for the still open case of 8 intersecting exceptional curves. 
}
\setcounter{tocdepth}{2}

\section{Introduction}

An elliptic surface $\mathscr{E}$ over a field $k$ is an projective variety of dimension 2 endowed with a fibration in elliptic curves $\pi:\mathscr{E}\longrightarrow \mathbb{P}^1$. In this paper, we also assume this fibration has at least one section, which guarantees that $\mathscr{E}$ admits a description as solution of a Weierstrass equation with polynomial coefficients, where the given section is set to be the zero-section. This object can thus alternatively be seen as an elliptic curve $\mathscr{E}_T$ over $k(T)$  whose rank we call the \textit{generic rank of $\mathscr{E}$}. A section of $\mathscr{E}$ corresponds to a $k(T)$-point of $\mathscr{E}$  and vice versa. We call each fiber $\mathscr{E}_t:=\pi^{-1}(t)$ of the fibration the \textit{specialization} at $t\in\P^1$. 

Silverman's specialization theorem predicts that when $k$ is a number field, then for all but finitely many $t\in\mathbb{P}^1$, the rank of the corresponding specialisation is at most the generic rank of $\mathscr{E}$: $$r_{k(T)}(\mathscr{E}_T)\geq r_k(\mathscr{E}_t).$$

Two natural questions are thus:

\begin{questions}
\begin{enumerate}
\item[]
\item When do we have a \textit{rank jump}, i.e. a value $t\in\mathbb{P}^1$ such that $r_{k(T)}(\mathscr{E}_T)<r_{k}(\mathscr{E}_t)$?
\item When do we have a \textit{rank fall}, i.e. a value $t\in\mathbb{P}^1$ such that $r_{k(T)}(\mathscr{E}_T)>r_{k}(\mathscr{E}_t)$?
\end{enumerate}
\end{questions}

While the rank jump question has been investigated by several different authors, much less is known about the second question. In this paper, although we do not study the rank fall question in its current form, we make progress toward the following variation: 

\begin{question}\label{q:manysectionsmeet}
When do `many' sections of an elliptic surface meet?
\end{question}

Sections of $\mathscr{E}$ that are non-torsion points in $\mathscr{E}_T(k)$ can intersect fibers of $\pi$ in non-torsion points, thus potentially contributing to the rank of the fiber. If multiple sections of infinite order of $\mathscr{E}$ intersect in one point, this potential contribution to the rank of the fiber on which the intersection occurs goes down (note that this does not automatically mean that there is a rank fall, since there might be points of infinite order on the same fiber, not coming from the specialization of a section). If the point at the intersection of many sections is torsion, the potential contribution of these sections to the rank drops to zero. Thus our final question is the following:

\begin{question}\label{q:meetattorsion}
When do `many' sections of $\E$ meet at a point which is torsion on its fiber?
\end{question}

Questions \ref{q:manysectionsmeet} and \ref{q:meetattorsion} are very difficult to solve on a general elliptic surface, and for this reason, we are looking at the case where $\E$ has a minimal model that is a del Pezzo surface of degree~1. In this case, $\mathscr{E}$ is rational. As mentioned in Remark \ref{r:density}, another motivation for us to study the questions for these surfaces is the distribution of the rational points on del Pezzo surfaces of degree~1.

\vspace{11pt}

A del Pezzo surface over a field $k$ is a smooth, projective, geometrically integral surface over $k$ with ample anticanonical divisor. The degree of a del Pezzo surface is the self-intersection number of the canonical divisor, which is an integer between 1 and 9. Over an algebraically closed field, a del Pezzo surface of degree $d$ is isomorphic to either $\mathbb{P}^1 \times\mathbb{P}^1$ (for $d = 8$), or to $\mathbb{P}^2$ blown up at $9-d$ points in general position \cite[Theorem 24.4.]{Manin}. Over non-algebraically closed fields, this is in general not true, and the arithmetic of these surfaces has been widely studied as we report in Remark \ref{rem: arithmetic DP's}. As is evident from the known results on the distribution of the rational points, the arithmetic complexity of del Pezzo surfaces goes up as the degree goes down. 

Over an algebraically closed field, a del Pezzo surface contains a finite number of exceptional curves, depending on the degree of the surface; we often call these \textsl{lines}.

For a del Pezzo surface $S$ of degree 1 and the corresponding elliptic surface $\E$ obtained by blowing up the base point of the anticanonical linear system on $S$, we ask the following. 

\begin{question}\label{Question}
If a point on $S$ is contained in `many' lines, is the corresponding point on $\E$ then torsion on its fiber?
\end{question}

Of course, `many' needs to be specified. In this paper we give a positive answer to this question for `many' equal to 9:

\begin{theorem}\label{thmtorsion}
If at least 9 exceptional curves on $S$ are concurrent in a point, then the corresponding point on $\E$ is torsion on its fiber. 
\end{theorem}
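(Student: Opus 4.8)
The plan is to transfer the concurrency condition into a linear‑dependence statement inside the Mordell--Weil lattice of $\E$ and then extract from it a relation that forces torsion. Recall that $\E$ is a rational elliptic surface with only irreducible fibers, so its Mordell--Weil group $\mathrm{MW}(\E)$ is the torsion‑free lattice $E_8$, and the $240$ exceptional curves on $X$ correspond to the $240$ sections of minimal height, i.e.\ to the $240$ roots of $E_8$. Suppose the concurrent exceptional curves correspond to sections $P_1,\dots,P_n$ with $n\ge 9$, and let $R$ be their common point. Since a section meets each fiber in a smooth point, $R$ is a smooth point of the fiber $F_t$ on which it lies, and it is the common value $P_1(t)=\dots=P_n(t)=R$ of the specializations of the $P_i$ to $F_t$.

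First I would use the specialization homomorphism $\sigma_t\colon \mathrm{MW}(\E)\to F_t^{\sharp}$ into the group of smooth points of $F_t$. Because $\mathrm{MW}(\E)\cong E_8$ is torsion‑free, it suffices to produce integers $a_1,\dots,a_n$, not all zero, with $\sum_i a_iP_i=0$ in $\mathrm{MW}(\E)$ and $\sum_i a_i\neq 0$: applying $\sigma_t$ then gives $\bigl(\sum_i a_i\bigr)R=\sigma_t\!\bigl(\sum_i a_iP_i\bigr)=0$, so $R$ is torsion of order dividing $\sum_i a_i$. Thus the statement becomes purely lattice‑theoretic: I must show that $9$ or more roots through a common point admit a dependence with nonzero coefficient sum.

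Next I would compute the Gram matrix $G=\bigl(\langle P_i,P_j\rangle\bigr)_{i,j}$ of the height pairing. Since all fibers are irreducible the local contributions vanish, and for minimal sections $(P_i\cdot O)=0$, so Shioda's formula gives $\langle P_i,P_i\rangle=2$ and, for $i\neq j$, $\langle P_i,P_j\rangle=1-(P_i\cdot P_j)$. As $P_i$ and $P_j$ are distinct irreducible curves sharing the point $R$, we have $(P_i\cdot P_j)\ge 1$, hence $\langle P_i,P_j\rangle\le 0$ for all $i\neq j$. Therefore $G$ is a symmetric positive semidefinite matrix (a genuine Gram matrix in $E_8\otimes\R$) with diagonal entries $2$ and nonpositive off‑diagonal entries, and its rank equals the dimension of the span of the $P_i$, which is at most $\rank E_8=8<n$. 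In particular $G$ is singular.

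The heart of the argument is to find a null vector of $G$ with positive coefficient sum. Writing $G=2I-B$ with $B\ge 0$ entrywise, I would pass to the connected components of the graph whose edges are the pairs with $\langle P_i,P_j\rangle<0$; after reordering, $G$ is block‑diagonal with irreducible blocks. A rank count shows at least one block is singular (otherwise $\rank G=n>8$), and such a block has size $\ge 2$ since a $1\times 1$ block equals $[2]$. On that block $B$ is nonnegative and irreducible with spectral radius $2$, so by Perron--Frobenius the kernel of $2I-B$ there is one‑dimensional and spanned by a strictly positive vector; being the kernel of a rational matrix it is also spanned by a rational, hence after clearing denominators an integral, vector $a$ with all entries of one sign. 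Extending $a$ by zeros yields the relation $\sum_i a_iP_i=0$ with $\sum_i a_i\neq 0$, completing the proof. The main obstacle is precisely this last step: linear dependence of $9$ vectors in a rank‑$8$ lattice is automatic, but guaranteeing that the dependence has \emph{nonzero} coefficient sum (equivalently, that $0$ lies in the affine span of the $P_i$) is what the sign structure of the height Gram matrix and the Perron--Frobenius theorem are needed for.
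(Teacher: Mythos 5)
Your proposal is correct, and it reaches the conclusion by a genuinely different route than the paper. The paper first removes points on the ramification curve of the bi-anticanonical map (Remark \ref{remramcurve}), so that all pairwise intersection multiplicities are $1$ or $2$, then invokes the classification from \cite{WvL} of the $18$ maximal cliques of size at least $9$ with edge weights $-1,0$ in the root graph $\Gamma$, and finally computes the kernels of all $18$ Gram matrices with \texttt{magma} (Table \ref{kernels}), checking case by case that some kernel vector supported on the given curves has nonzero coefficient sum. You replace the ramification-curve reduction, the clique classification, and the computer calculation by a single structural observation: concurrency forces $(P_i\cdot P_j)\geq 1$, hence $\langle P_i,P_j\rangle_h\leq 0$ for $i\neq j$, so the height Gram matrix is a positive semidefinite integer matrix with diagonal $2$ and nonpositive off-diagonal entries; since $n\geq 9>8=\operatorname{rank}\mathbf{E}_8$, some irreducible diagonal block (of size at least $2$, as a $1\times1$ block equals $[2]$) is singular, and Perron--Frobenius applied to that block --- where $B\geq 0$ is irreducible with spectral radius exactly $2$, since $2I-B$ is positive semidefinite and singular --- yields a one-dimensional kernel spanned by a strictly positive, hence (after scaling) integral, vector with positive coefficient sum. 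I verified each step; in particular your argument even absorbs the multiplicity-$3$ case ($\langle P_i,P_j\rangle_h=-2$), which the paper handles separately via $2$-torsion on the ramification curve. Conceptually, your singular block is an affine Cartan matrix and your Perron vector is its vector of marks: the entries of Table \ref{kernels} are precisely such vectors for the $18$ configurations, and Proposition \ref{prop: n-gon torsion} is your $\widetilde{A}_{n-1}$ case. What the paper's computational route buys in exchange is the explicit list of configurations and kernels, which it reuses in Section \ref{Sec: 8lines} to prove Theorem \ref{thm: case8lines} and which yields explicit bounds on the torsion order; your argument is shorter, uniform, and computation-free, but does not by itself provide that finer information.
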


We also show that if we take `many' to be 7, the answer to this question is negative, at least in most characteristics, by providing two counterexamples (Examples~\ref{ex7lines} and \ref{ex7lines2}). These are the first known examples of a non-torsion point contained in more than 6 lines.

\vspace{11pt}

We call a set of exceptional curves on a del Pezzo surface \textsl{concurrent} in a point if that point is contained in all of them. 

\begin{remark}\label{r:density}For del Pezzo surfaces of degree 2, the situation is simpler, and a result similar to Theorem \ref{thmtorsion} is known. A del Pezzo surface of degree 2 is a double cover of $\mathbb{P}^2$ ramified along a quartic curve. On such a surface, a point is contained in at most 4 exceptional curves, and this happens exactly when its projection to $\mathbb{P}^2$ is in the intersection of 4 bitangents of the quartic curve. In \cite{K05}, Kuwata constructs elliptic surfaces from del Pezzo surfaces of degree 2. He shows that for a point contained in 4 exceptional curves on a del Pezzo surface of degree 2, the corresponding point on the elliptic surface is torsion on its fiber \cite[Proposition 7.1]{K05}. 

The situation for del Pezzo surfaces of degree 1 is more complex. First of all, outside characteristics 2 and 3 the maximal number of concurrent lines on a del Pezzo surface of degree~1 is 10 \cite[Theorems 1.1 and 1.2]{vLW}, but as Theorem \ref{thmtorsion} shows, a point contained in 9 lines is already torsion on its fiber. Moreover, 4 intersecting lines on a del Pezzo surface of degree 2 intersect pairwise with multipliciy 1, but there are a priori many different ways in which 9 or more concurrent lines on a del Pezzo surface of degree 1 can intersect; we explain this in Section \ref{sec: proof main thm}.
\end{remark}

The question of whether one can find an example with 8 lines on $S$ that intersect in a point which is non-torsion on its fiber of $\E$ stays unsolved. We show that the lines in such an example, if it exists, intersect each other according to one of 15 prescribed configurations in general, and 13 in characteristic 0.

\begin{theorem}\label{thm: case8lines}If 8 exceptional curves on a del Pezzo surface $S$ of degree 1 are concurrent in a point, then the corresponding point on the elliptic surface $\E$ obtained by blowing up the base point of the anticanonical linear system is torsion on its fiber, except possibly in the following case.  Consider the graph $\mathcal{G}$ where each vertex corresponds to one of the 8 exceptional curves and two vertices are connected with an edge if and only if the corresponding exceptional curves intersect with multiplicity 2. Then the 8 exceptional curves intersect pairwise with multiplicities 1 or 2, and the graph $\mathcal{G}$ equals one of the graphs with numbers $1,\ldots8,10,\ldots,15,19$ in Figure~\ref{table cliques}. In characteristic 0, numbers 7 and 15 can be excluded from this list.
\end{theorem}

\begin{remark}\label{rem: arithmetic DP's}
Our original motivation comes from the distribution of rational points on del Pezzo surfaces.

Del Pezzo surfaces of degree at least 2 over a field $k$ with a $k$-rational point are known to be $k$-unirational under the extra condition for degree 2 that the $k$-rational point lies outside a closed subset \cite{Se43,Se51,Manin,Ko02,Pie,STVA}. Del Pezzo surfaces of degree 1 over a field with characteristic not 2 are known to be unirational if they admit a conic bundle structure \cite{KM17}, but outside this case, there is no example
of a minimal del Pezzo surface of degree 1 that is known to be
$k$-unirational, nor of one that is known not to be $k$-unirational. Unirationality for del Pezzo surfaces of degree 1 in general is considered an extremely difficult problem. Over infinite fields there are several partial results on Zariski density of the set of rational points on these surfaces \cite{VA11,Ulas2,Ulas,Jabara,SVL,BulthuisVanLuijk,DW}, which is a weaker notion in the sense that it is implied by unirationality. However, there are still many families of del Pezzo surfaces of degree 1 for which even Zariski density of rational points is not proven.

For several of the earlier mentioned results, one requires the existence of a rational point on the surface which is not contained in too many lines. For example, in the paper \cite{STVA} the authors show that a del Pezzo surface of degree 2 is unirational if it contains a point that is not contained in 4 lines, and lies outside a specific subset of the surface. As another example, one of the conditions for the set of rational points on a del Pezzo surface of degree 1 to be dense in \cite{SVL} is the existence of a point not contained in 6 of these lines. At the same time, several of the results on density of rational points on a del Pezzo surface of degree 1 require the existence of a point which is non-torsion on its fiber of the elliptic surface obtained by blowing up the base point of the anticanonical linear system; see for instance \cite{SVL,DW}. The paper \cite{SVL} contains several examples of a point on a del Pezzo surface of degree 1 for which their method fails, and in all cases, the point is contained in the intersection of at least 6 lines, and it is torsion on its fiber. This observation motivates Question \ref{Question}.
\end{remark}

The paper is organized as follows. In Section \ref{Sec: main results} we present the necessary background on the exceptional curves on a del Pezzo surface $S$ of degree 1, the relation to the root system \textbf{E}$_8$, the elliptic surface $\E$ obtained from $S$, and the strict transforms on $\E$ of the exceptional curves on $S$. In Section \ref{sec: case of 7} we show that 7 concurrent lines do not always intersect in a torsion point, by giving two counterexamples (Examples \ref{ex7lines} and \ref{ex7lines2}). In Section \ref{sec: proof main thm} we prove Theorem \ref{thmtorsion}. In Section \ref{Sec: 8lines} we study the case of $8$ lines and prove
Theorem~\ref{thm: case8lines}. 

\vspace{11pt}

Part of this paper appeared in the PhD thesis of the second author. Specifically, Sections \ref{Sec: main results} and~\ref{sec: proof main thm} are slight modifications of \cite[Chapter 5 and Sections 1.4.2, 1.4.3]{Wthesis}. The innovative material compared to what is found in the thesis is Sections \ref{sec: case of 7} and \ref{Sec: 8lines}, which includes Theorem \ref{thm: case8lines}.

\vspace{11pt}

Computations were done in \texttt{magma} \cite{MR1484478}; the code is available online \cite{MagmaCode}.

\subsection{Acknowledgements}

We started thinking about this question a while ago, and we are grateful to Marc Hindry, Cec\'{i}lia Salgado and Anthony V\'arilly-Alvarado for helpful discussion in formulating our problem. The ideas of Ronald van Luijk and Jennifer Park contributed greatly to the proof of Theorem \ref{thmtorsion} when we discussed this question with them during the trimester Reinventing Rational Points at IHP in Paris. The first author is partially supported by an NSERC Discovery Grant. Many of the computations were done on the compute servers of the Max Planck Institute for Mathematics in the Sciences in Leipzig, while the second author was being employed there. While writing the paper, the second author was supported by UKRI fellowship MR/T041609/2.

\section{Background}\label{Sec: main results}

In this Section we give the necessary background for the rest of this paper.

\subsection{Exceptional curves and the $E_8$ root system}

\begin{definition}
Let $r\leq8$ be an integer, and let $P_1,\ldots,P_r$ be points in $\mathbb{P}^2$. 
We say that $P_1,\ldots,P_r$ are in \textsl{general position} if there is no line containing three of the points, no conic containing six of the points, and no cubic containing eight of the points with a singularity at one of them. 
\end{definition}

As mentioned in the introduction, a del Pezzo surface over an algebraically closed field is isomorphic to either $\mathbb{P}^1 \times\mathbb{P}^1$ (for $d = 8$), or to $\mathbb{P}^2$ blown up at $9-d$ points in general position \cite[Theorem 24.4]{Manin}. 

\begin{definition}An \textsl{exceptional curve} on a del Pezzo surface $S$ with canonical divisor $K_S$ is an irreducible projective curve $C\subset S$ such that $$C^2=C\cdot K_S=-1.$$
\end{definition}

We often call exceptional curves \textsl{lines}, since for a del Pezzo surface of degree $d\geq3$, the images of these curves under the anticanonical embedding in $\mathbb{P}^d$ are lines. Over an algebraically closed field, we know exactly how to describe the lines on a del Pezzo surface in terms of curves in~$\P^2$.

\begin{theorem}[{\cite[Theorem 26.2]{Manin}}]\label{thm: exc curves P2} For an integer $d\in\{1,\ldots,8\}$, let $P_1,\dots,P_{9-d}$ be $9-d$ points in general position in~$\P^2$. The exceptional curves on the del Pezzo surface of degree $9-d$ obtained by the blow-up of $P_1,\ldots,P_{9-d}$ are 
\begin{itemize}
\item the exceptional curves $E_i$ above the points $P_i$ for $i\in\{1,\ldots,9-d\}$,
\end{itemize}
and the strict transforms of the following curves in $\mathbb{P}^2$.
\begin{itemize}
	\item The line $L_{i,j}$ passing through the points $P_i$ and $P_j$, for $i\not=j$,
	\item the conics passing through five of the points,
	\item the cubic $C_{i,j}$ not passing through $P_j$, passing twice through $P_i$ and passing once through the six remaining points,
	\item the quartic $Q_{i,j,k}$ passing through the eight points with a double point in $P_i$, $P_j$ and $P_k$ for $i,j,k$ distinct,
	\item the quintics passing through the eight points with double points at 6 of them, and
	\item the sextics passing through the eight points with double points at 7 of them, and a triple point at one of them.
\end{itemize}
\end{theorem}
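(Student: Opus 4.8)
The plan is to work entirely inside the Picard lattice of the blow-up and to reduce the geometric classification to an elementary Diophantine problem. Let $X\to\P^2$ be the blow-up of the points $P_1,\dots,P_r$ (with $r=9-d$), let $\ell$ denote the pullback of the class of a line, and let $e_1,\dots,e_r$ be the classes of the exceptional divisors. Then $\mathrm{Pic}(X)$ is free of rank $r+1$ with basis $\ell,e_1,\dots,e_r$, the intersection form is diagonal with $\ell^2=1$, $e_i^2=-1$ and $\ell\cdot e_i=0$, and the canonical class is $K_X=-3\ell+\sum_{i=1}^r e_i$. Writing an arbitrary class as $C=a\ell-\sum_{i=1}^r b_ie_i$, the conditions $C^2=-1$ and $C\cdot K_X=-1$ become
\begin{equation*}
\sum_{i=1}^r b_i=3a-1,\qquad \sum_{i=1}^r b_i^2=a^2+1.
\end{equation*}
A direct check shows that the class of each curve listed in the statement --- with $b_i$ recording the multiplicity at $P_i$ and $a$ the degree in $\P^2$ --- solves this system; for example the quartic $Q_{i,j,k}$ corresponds to $a=4$ with three of the $b_i$ equal to $2$ and the remaining five equal to $1$. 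By adjunction every such class has arithmetic genus $0$.

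First I would show that the list of solutions is complete. Cauchy--Schwarz gives $(3a-1)^2=\bigl(\sum_{i} b_i\bigr)^2\le r\sum_{i} b_i^2=r(a^2+1)\le 8(a^2+1)$, which forces $-1\le a\le 7$; a short refinement using integrality (for a fixed value of $\sum_i b_i$ the sum $\sum_i b_i^2$ is minimized by the most balanced distribution, and for $a=7$, $r=8$ this minimum is $52>50$) rules out $a=7$, so $a\in\{0,1,\dots,6\}$. For each such $a$ I would enumerate the non-negative integer solutions up to permutation of the indices, together with the exceptional case $a=0$ giving the classes $e_i$ of the $E_i$; matching each solution with its prescribed multiplicity pattern recovers exactly the lines, conics, cubics $C_{i,j}$, quartics $Q_{i,j,k}$, quintics and sextics in the statement.

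Next I would verify that every solution class is represented by a unique irreducible curve with the stated geometry. Riemann--Roch on $X$ gives $\chi(C)=1+\tfrac12(C^2-C\cdot K_X)=1$, and since $K_X-C$ has $\ell$-coefficient $-(a+3)<0$ it meets the nef class $\ell$ negatively and hence has no sections, so $h^2(C)=h^0(K_X-C)=0$ and $C$ is effective. The general position hypothesis then ensures that the corresponding linear system in $\P^2$ (plane curves of degree $a$ with the prescribed multiplicities at the $P_i$) is nonempty, of the expected dimension $0$, and that its unique member is irreducible and reduced: a reducible or non-reduced representative would force three collinear points, six points on a conic, or eight points on a singular cubic, each excluded by general position. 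Conversely, an irreducible curve on $X$ with $C^2=C\cdot K_X=-1$ that is not one of the $E_i$ cannot contain any $e_i$ as a component, so its image in $\P^2$ is an irreducible plane curve of degree $a\ge 1$ with multiplicities $b_i\ge0$ satisfying the system above, and therefore appears in the list.

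I expect the irreducibility step to be the main obstacle: the arithmetic produces the candidate classes freely, but proving that each is represented by an honest irreducible curve --- and not merely by a sum of several lower-degree exceptional classes --- is precisely where the general position hypothesis must enter, and it has to be checked uniformly across all six degrees. The remaining bookkeeping (solving the Diophantine system and matching multiplicity patterns to the named curves) is routine once the bound $a\le 6$ is established.
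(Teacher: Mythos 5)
The paper itself offers no proof of this statement: it is quoted directly from Manin's book with the citation [Manin, Theorem 26.2]. So the comparison can only be with the classical argument, which your proposal reproduces in its standard form: pass to $\mathrm{Pic}(X)=\Z\ell\oplus\Z e_1\oplus\cdots\oplus\Z e_r$, translate $C^2=C\cdot K_X=-1$ into $\sum_i b_i=3a-1$ and $\sum_i b_i^2=a^2+1$, bound the degree, enumerate, and then invoke general position for geometric realizability. Your arithmetic is correct throughout: the Cauchy--Schwarz bound $(3a-1)^2\le 8(a^2+1)$ does give $-1\le a\le 7$, the balanced-distribution count $52>50$ does exclude $a=7$, the solutions for $a=0,1,\dots,6$ match exactly the classes $e_i$, lines, conics, cubics $C_{i,j}$, quartics $Q_{i,j,k}$, quintics, and sextics in the statement, and the Riemann--Roch computation $\chi=1$, $h^2=h^0(K_X-C)=0$ correctly yields effectivity. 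The converse direction (an irreducible exceptional curve other than an $E_i$ has $b_i=C\cdot e_i\ge 0$ and hence appears in the enumeration) is also sound.

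The one genuine gap is the step you yourself flag: irreducibility and reducedness of the effective representative. Your assertion that a reducible or non-reduced member ``would force three collinear points, six points on a conic, or eight points on a singular cubic'' is not something that can be read off uniformly; for the sextic class, say, a reducible representative could a priori split as two cubics, or a conic plus a quartic, or involve some $E_i$ as a component, and excluding every such pattern by plane geometry across all six degrees is a substantial case analysis that your sketch does not carry out. The standard repair makes general position enter exactly once, through the fact the paper already cites (Manin, Theorem 24.4): the blow-up $X$ of points in general position is a del Pezzo surface, so $-K_X$ is ample. Then for your effective class one has $C\cdot(-K_X)=1$, and writing $C=\sum_j m_jD_j$ with $D_j$ irreducible gives
\begin{equation*}
1=C\cdot(-K_X)=\sum_j m_j\,\bigl(D_j\cdot(-K_X)\bigr),\qquad D_j\cdot(-K_X)\ge 1\ \text{for each }j,
\end{equation*}
so $C$ consists of a single component with multiplicity one, i.e.\ is irreducible and reduced; uniqueness of the representative follows from $C^2=-1$ (two distinct effective divisors in the class would meet negatively, forcing a common component and hence equality). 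Substituting this ampleness argument for your unproven case-by-case claim turns your outline into a complete proof along essentially the same lines as Manin's.
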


\begin{notation}\label{notation curves P^2}
Throughout the paper, for $r$ points $P_1,\ldots,P_r$ in general position in $\mathbb{P}^2$, and for $i\in\{1,\ldots,r\}$, we use the notation $E_i$ for the exceptional curve above $P_i$ on the del Pezzo surface obtained by blowing up $P_1,\ldots,P_r$. Similarly, for $i,j,k\in\{1,\ldots,r\}$ we write $L_{i,j}$, $C_{i,j}$, $Q_{i,j,k}$ for the lines, cubics, and quartics in $\mathbb{P}^2$, as defined in Theorem~\ref{thm: exc curves P2}.
\end{notation}

From now on, we focus on del Pezzo surfaces of degree 1. Let $S$ be such a surface over an algebraically closed field. From Theorem~\ref{thm: exc curves P2} it follows that $S$ contains 240 exceptional curves. These are in one-to-one correspondence with the root system \textbf{E}$_8$, as we will now describe.

\vspace{5pt}

Let $\langle\cdot,\cdot\rangle$ be the negative of the intersection pairing on Pic $S$, and let $K_S$ be the canonical divisor of $S$. Then $\langle\cdot,\cdot\rangle$ on $\mathbb{R}\otimes_{\mathbb{Z}}\mbox{Pic }S$ induces the structure of a Euclidean space on the orthogonal complement $K_S^{\perp}$ of the class of the canonical divisor, and with this structure, the set $$R=\{D\in\mbox{Pic }S\;|\;\langle D,D\rangle=2;\;D\cdot K_{S}=0\}$$ is a root system of type \textbf{E}$_8$ in $K_{S}^{\perp}$ \cite[Theorem 23.9]{Manin}. 
Let $I$ be the set of the 240 exceptional curves in Pic $S$. For $e\in I$ we have $e+K_{S}\in K_{S}^{\perp}$ and $\langle e+K_{S},e+K_{S}\rangle=2$, and this gives a bijection \begin{equation}\label{eq:bijection C E}
I\longrightarrow R,\;\;\; e\longmapsto e+K_{S}.\end{equation} For $e_1,e_2\in I$ we have $\langle e_1+K_{S},e_2+K_{S}\rangle=1-e_1\cdot e_2$. As a consequence of this bijection, the group of permutations of $I$ that preserve the intersection pairing is isomorphic to the Weyl group $W_8$, which is the group of permutations of \textbf{E}$_8$ generated by the reflections in the hyperplanes orthogonal to the roots \cite[Theorem 23.9]{Manin}. Another way of stating the bijection (\ref{eq:bijection C E}) is to note that the \textsl{weighted graphs} on $I$ and \textbf{E$_8$} and their automorphism groups are isomorphic (Remark \ref{remgammaG}).

\begin{definition}\label{defgraph}
By a \textsl{graph} we mean a pair $(V,D)$, where $V$ is a set of elements called \textsl{vertices}, and $D$ a subset of the power set of $V$ such that every element in $D$ has cardinality 2; elements in $D$ are called \textsl{edges}, and the \textsl{size} of the graph is the cardinality of~$V$. A graph $(V,D)$ is \textsl{complete} if for every two distinct vertices $v_1,v_2\in V$, the pair $\{v_1,v_2\}$ is in $D$.\\
By a \textsl{weighted graph} we mean a graph $(V,D)$ with a map $\psi\colon D\longrightarrow A$, where $A$ is any set, whose elements we call \textsl{weights}; for any element $d$ in $D$ we call $\psi(d)$ its weight. If $(V,D)$ is a weighted graph with weight function~$\psi$, then we define a \textsl{weighted subgraph} of $(V,D)$ to be a graph $(V',D')$ with map $\psi'$, where $V'$ is a subset of $V$, while $D'$ is a subset of the intersection of $D$ with the power set of $V'$, and $\psi'$ is the restriction of $\psi$ to $D'$. A \textsl{clique} of a weighted graph is a complete weighted subgraph. Its \textsl{size} is the number of vertices.\\
An \textsl{isomorphism} between weighted graphs $(V,D)$ and $(V',D')$ with weight functions $\psi\colon D\longrightarrow A$ and $\psi'\colon D'\longrightarrow A'$, respectively, consists of a bijection $f$ between the sets $V$ and $V'$ and a bijection $g$ between the sets $A$ and $A'$, such that for any two vertices $v_1,v_2\in V$, we have $\{v_1,v_2\}\in D$ with weight $w$ if and only if $\{f(v_1),f(v_2)\}\in D'$ with weight $g(w)$. We call the map $f$ an \textsl{automorphism} of $(V,D)$ if $(V,D)=(V',D')$, $\psi=\psi'$, and $g$ is the identity on $A$.    
\end{definition}

\begin{definition}\label{defgammaG}
By $\Gamma$ we denote the complete weighted graph whose vertex set is the set of roots in~\textbf{E$_8$}, and where the weight function is induced by the dot product.  
Similarly, by $G$ we denote the complete weighted graph whose vertex set is $I$, and where the weight function is the intersection pairing in Pic $S$. 
\end{definition}

\begin{remark}\label{remgammaG}
There is an isomorphism of weighted graphs between $G$ and $\Gamma$, that sends a vertex $e$ in $G$ to the corresponding vertex $e+K_S$ in $\Gamma$, and an edge $d=\{e_1,e_2\}$ in~$G$ with weight~$w$ to the edge $\delta=\{e_1+K_S,e_2+K_S\}$ in $\Gamma$ with weight $1-w$. The different weights that occur in $G$ are $0,1,2,$ and $3$, and they correspond to weights $1,0,-1,$ and~$-2$, respectively, in $\Gamma$. As a consequence, the weighted graphs $G$ and $\Gamma$ have isomorphic automorphism groups, given by the Weyl group $W_8$.
\end{remark}

\subsection{The elliptic surface}

Let $S$ be a del Pezzo surface of degree 1 over a field $k$ with canonical divisor $K_S$. The surface $S$ can be embedded in the weighted projective space $\mathbb{P}(2,3,1,1)$ with coordinates $(x:y:z:w)$ as the set of solutions to the equation \begin{equation}\label{eqrem}y^2+a_1xy+a_3y-x^3-a_2x^2-a_4x-a_6=0,
\end{equation}
where $a_i\in k[z,w]$ is homogeneous of degree $i$ for each $i$ in $\{1,\ldots,6\}$. The linear system $|{-K}_S|$ induces a rational map $S\dasharrow\mathbb{P}^1,\;$ $(x:y:z:w)\mapsto(z:w)$, which is defined everywhere except in the base point of $|{-K}_S|$, given by $\mathcal{O}=(1:1:0:0)$. The blow-up of $S$ in $\mathcal{O}$ gives a rational elliptic surface $\E$ with only irreducible fibres,\footnote{Reciprocally, the contraction of the zero-section on a rational elliptic surface $\E$ produces a del Pezzo surface of degree~1 whenever $\E$ has only irreducible fiber \cite[Corollaire 1.2.9, Lemme 1.2.10.]{Desjardinsthese}.} and we denote this blow-up by $\pi\colon\E\longrightarrow S$. We denote the induced elliptic fibration on $\E$ by $\nu\colon\E\longrightarrow\mathbb{P}^1$. The generic fiber of $\E$ is an elliptic curve $E$ over the function field $k(t)$ of $\mathbb{P}^1$. We call the Mordell--Weil group of $E$ the \textsl{Mordell--Weil group of $\E$}. Points in $E(k(t))$  correspond to sections of $\nu$ that are defined over $k$ \cite[Proposition 3.10]{Sil94}. For $(z_0:w_0)\in\mathbb{P}_k^1$, the fiber $\nu^{-1}((z_0:w_0))$ is isomorphic to the cubic curve in $\mathbb{P}_k^2$ with affine Weierstrass equation 
\begin{equation}\label{wsfiber} Y^2+a_1(z_0,w_0)XY+a_3(z_0,w_0)Y=X^3+a_2(z_0,w_0)X^2+a_4(z_0,w_0)X+a_6(z_0,w_0).
\end{equation}

The point at infinity on such a fiber is the intersection on $\E$ with the exceptional divisor $\tilde{\mathcal{O}}$ above $\mathcal{O}$. For a point $P\in S\setminus\{\mathcal{O}\}$ we denote by $P_{\E}$ the corresponding point on $\E$.

\begin{remark}\label{remramcurve}The linear system $|-2K_{S}|$ of the bi-anticanonical divisor of $S$ induces a morphism~$\varphi$, which is the composition of the projection of $S$ to $\mathbb{P}(2, 1, 1)$ on the $x,z,w$-coordinates, and the 2-uple embedding of $\mathbb{P}(2, 1, 1)$ in $\mathbb{P}^3$. This morphism realizes $S$ as a double cover of a cone in $\mathbb{P}^3$ ramified over a sextic curve. Using the notation in (\ref{eqrem}), the morphism $\varphi$ is ramified at the points $(x_0:y_0:z_0:w_0)\in S$ for which we have $2y_0+a_1x_0+a_3=0$, and from (\ref{wsfiber}) it follows that these are exactly the points that are 2-torsion on their fiber on $\E$.
\end{remark}

\begin{remark}\label{exceptional curves are sections}
Since exceptional curves on $S$ are defined over a separable closure of $k$ \cite[Theorem 2.1.1]{VAthesis}, from \cite[Theorem 1.2]{VA08} it follows that the exceptional curves on $S\subset\mathbb{P}(2,3,1,1)$ are exactly the curves given by $$x=p(z,w),\;\;\;y=q(z,w),$$ where $p,q\in k[z,w]$ are homogeneous of degrees 2 and 3, respectively. Note that this implies that an exceptional curve never contains $\mathcal{O}=(1:1:0:0)$. Therefore, for an exceptional curve $C$ on~$S$, its strict transform $\pi^*(C)$ on~$\E$ satisfies $$\pi^*(C)^2=-1,\;\;\;\;\;\pi^*(C)\cdot K_{\E}=\pi^*(C)\cdot(\pi^*(K_{S})+\tilde{\mathcal{O}})=-1+0=-1.$$ Thus $\pi^*(C)$ is an exceptional curve on $\E$ as well, and, since a fiber of $\nu$ is linearly equivalent to $-K_{\E}$,
the curve $\pi^*(C)$ intersects every fiber once. This gives a section of~$\nu$.
\end{remark}

\begin{remark}\label{rem: rank 8 so 9 gives torsion}
Theorem \ref{thmtorsion} seems intuitively true by the following argument, which was pointed out to us by several people. Let $P$ be a point on $S$ that is contained in at least 9 exceptional curves, say $L_1,\ldots,L_n$. These curves correspond to sections $\tilde{L}_1,\ldots,\tilde{L}_n$ of $\E$ (Remark \ref{exceptional curves are sections}), which in turn correspond to elements in the Mordell--Weil group of $\E$. This Mordell--Weil group has rank at most 8 over $k$ \cite[Theorem 10.4]{Shioda_MW}, so in this group there must be a relation $a_1\tilde{L}_1+\cdots+a_n\tilde{L}_n=0$, where $a_1,\ldots,a_n\in \mathbb{Z}$ are not all zero. Since all $n$ exceptional curves contain the point $P$, this specializes to $(a_1+\cdots+a_n)P_{\E}=0$ on the fiber of $P$ on $\E$. If one reasons too quickly, it seems that this proves that $P_{\E}$ is torsion of order dividing $a_1+\cdots +a_n$ on its fiber. However, it might be the case that $a_1+\cdots +a_n=0$, so this does not prove Theorem \ref{thmtorsion}. The key part in our proof is therefore that we show, using results from \cite{WvL}, that there is always a relation between $\tilde{L}_1,\ldots,\tilde{L}_n$ in the Mordell--Weil group of $\E$ that specializes to a \textsl{non-trivial} relation on the fiber of~$P_{\E}$; see Lemma \ref{rankandkernel}. 
\end{remark}

\section{The case of 7 lines: two examples}\label{sec: case of 7}
In this section we give two examples that show that for a point $P$ in the intersection of 7 exceptional curves on a del Pezzo surface of degree 1, the point $P_{\E}$ is not guaranteed to be torsion on its fiber. This gives a negative answer to Question \ref{Question} for `many' equal to 7 or less. As explained in Remark~\ref{rem: 10 remaining char's}, our examples hold in all but 10 characteristics.

\begin{example}\label{ex7lines}
Let $S$ be the blow-up of $\mathbb{P}_{\mathbb{Q}}^2$ in the eight points:
\begin{align*}
&P_1=( 0: 1: 1 ); & &P_2=(0:14:13);\\
&P_3=(1:0:1); & &P_4=(21: 0: 13);\\
&P_5=(1: 1: 1); & &P_6=(6: 6: -1 );\\
&P_7=(-2: 2:1); & &P_8=(-3: 3: -1).
\end{align*}
It is easy to check that these points are in general position, thus $S$ is a del Pezzo surface of degree~1. Consider the following curves in $\mathbb{P}^2$ (see Notation \ref{notation curves P^2}): the line $L_{1,2}$ given by $x=0$, the line $L_{3,4}$ given by $y=0$, the line $L_{5,6}$ given by $x-y=0$, the line $L_{7,8}$ given by $x+y=0$, the cubic $C_{1,2}$ given by 
$$26x^3+42x^2y-68x^2z-33xy^2-9xyz+42xz^2-36y^3+72y^2z-36yz^2=0, $$
the cubic $C_{3,4}$ given by 
 $$36x^3 + 46x^2y - 72x^2z - 42xy^2 - 4xyz + 36xz^2 - 39y^3 + 81y^2z - 42yz^2=0,$$
and the quartic $Q_{2,6,7}$ given by\begin{multline*}
1144x^4 + 1288x^3y - 808x^3z - 2910x^2y^2 + 4748x^2yz - 3864x^2z^2 -
    1092xy^3 + 318xy^2z - 2352xyz^2\\
     + 3528xz^3 + 1521y^4 - 4797y^3z
    + 5040y^2z^2 - 1764yz^3=0.\end{multline*}
            These 7 curves all go through $Q=(0:0:1)$, and each of them gives rise to an exceptional curve on $S$ by Theorem \ref{thm: exc curves P2}. The base point $\mathcal{O}$ of the anticanonical linear system of $S$ is strict transform of the base point in $\P^2$ of the pencil of cubics through $P_1,\ldots,P_8$, which is given by 
$$B=(27 : 68: 109) .$$
The fiber of $Q$ of the elliptic surface obtained by blowing up $S$ in $\mathcal{O}$ is given by the strict transform of the cubic curve through $P_1,\dots,P_8,Q$ with origin given by $B$. With \texttt{magma} we check that the point $Q$ is non-torsion on this elliptic curve. 
\end{example}

\begin{remark}\label{rem: 10 remaining char's}
The previous example also holds over any other field $k$, as long as the characteristic of $k$ is $p$ for all but a finite number of primes $p$. In fact, the only characteristics for which this does not hold are the ones for which $P_1,\ldots,P_8$ are not in general position, and for which the fiber of $Q$ is not an elliptic curve, i.e., for which it is singular. We compute this with \texttt{magma} and find the primes $\{ 2, 3, 5, 7, 11, 13, 17, 19, 23, 29, 31, 41, 3319 \}$. 
It is not hard to generate similar examples that hold in some of the missing characteristics; for example, the eight points in $\mathbb{P}^2$ given by Example~\ref{ex7lines2} are in general position with a non-singular fiber in all but 29 characteristics, and this gives, together with Example \ref{ex7lines}, examples of 7 exceptional curves that are concurrent in a point $P$ such that $P_{\E}$ is not torsion on its fiber for each characteristic except for $p=2,3,5,7,11,13,17,19,23,29$.
\end{remark}

\begin{example}\label{ex7lines2}
Let $S$ be the blow-up of $\mathbb{P}_{\mathbb{Q}}^2$ in the eight points:
\begin{align*}
&P_1=( 0: 1: 1 ); & &P_2=(0:3861:1957);\\
&P_3=(1:0:1); & &P_4=(1188: 0: -19);\\
&P_5=(1: 1: 1); & &P_6=(780: 780: 1883 );\\
&P_7=(-52: 52: 51); & &P_8=(-9: 9: -17).
\end{align*}
It is an easy check that $P_1,\ldots,P_8$ are in general position, so $S$ is a del Pezzo surface of degree 1. Again the line $L_{1,2}$ is given by $x=0$, the line $L_{3,4}$ by $y=0$, the line $L_{5,6}$ by $x-y=0$, and the line $L_{7,8}$ by $x+y=0$. The cubic $C_{1,2}$ is now given by 
\begin{multline*}247x^3 - 15444x^2y +15197x^2z - 56500xy^2 +71944xyz \\
- 15444xz^2 - 24336y^3 +48672y^2z - 24336yz^2=0,\end{multline*}the cubic $C_{3,4}$ is given by 
\begin{multline*}24336x^3+48425x^2y-48672x^2z+15444xy^2\\
-63869xyz+24336xz^2+7828y^3-23272y^2z+15444yz^2=0,\end{multline*}and the quartic $Q_{2,6,7}$ is given by  \begin{multline*}2705155115x^4-160214640456x^3y+165198460765x^3z-340717645684x^2y^2\\
+583405507724x^2yz-245421685080x^2z^2-86417174688xy^3+301295315984xy^2z\\
-297351362880xyz^2+77518069200xz^3-6127758400y^4+30306884800y^3z\\
-48030840000y^2z^2+23851713600yz^3=0.\end{multline*}These 7 curves again all go through $Q=(0:0:1)$, and completely analogously to the previous example we check with \texttt{magma} that the point $Q$ is non-torsion on its fiber. 
\end{example}

\begin{remark}
The first steps of the process used to find Examples \ref{ex7lines} and \ref{ex7lines2} allow us to find a point $P$ on $S$ contained in 6 lines which is non-torsion. For instance, let $k$ be a field of characteristic~0, and consider the following points in $\mathbb{P}^2_k$.
\begin{align*}
& P_1=(0: 1: 1); & &P_2=( 0: 319: -920 );\\
&P_3=(1: 0: 1); & &P_4=(799: 0: 610);\\
&P_5=( 1: 1: 1 ); & &P_6=( 1: 1: -1);\\
&P_7=( 31: 1: 123); & &P_8=(31: 1: 11).\end{align*}
The curves $L_{1,2}$, $L_{3,4}$, $L_{5,6}$, $L_{7,8}$, $C_{1,2}$, and $C_{3,4}$ in $\P^2$ all go through $Q=(0:0:1)$, and each of them gives rise to an exceptional curve on $S$.
This is not the first example of  a non-torsion point contained in 6 lines, see \cite[Example 5.1.5]{Wthesis}. However, the 6 lines in this example have a different intersection graph than those in \cite[Example 5.1.5]{Wthesis}, showing that there are several families of such examples.
Comparatively, there are `many more' examples with 6 lines than 7 lines. While our method leads without trouble to more examples of 6 concurrent lines, the further conditions required by the construction of the $7^{\mbox{\tiny{th}}}$ line lead to a much sparser set of possibilities to obtain more examples like \ref{ex7lines} and \ref{ex7lines2}. 

\end{remark}

\section{The case of 9 or more lines: proof of Theorem \ref{thmtorsion}}\label{sec: proof main thm}
In this Section we prove Theorem \ref{thmtorsion}. Let $S$ be a del Pezzo surface of degree 1 over a field $k$, let $\E$ be the corresponding elliptic surface, and $E$ the generic fiber of $\E$. We start by describing a pairing on the Mordell--Weil group of~$\E$.

\vspace{11pt}
 
Let $\varphi:S\rightarrow\mathbb{P}^3$ be the morphism induced by the bi-anticanonical linear system on $S$ as in Remark~\ref{remramcurve}. Let $e_1,\ldots,e_n$ be at least 9 exceptional curves on $S$ that are concurrent in a point~$Q$ that lies outside the ramification curve of $\varphi$. Let $L_1,\ldots,L_n$ be the corresponding sections of $\nu :\E\rightarrow\mathbb{P}^1$. Let $\langle\cdot,\cdot\rangle_h$ be the symmetric and bilinear pairing on the Mordell--Weil group of $\E$ as defined in \cite[Theorem 8.4]{Shioda_MW}; that is, for $C_1,C_2$ in $E(k(T))$, we have $\langle C_1,C_2\rangle_h =-(\varphi_h(C_1)\cdot\varphi_h(C_2))$, where $\varphi_h\colon E(k(T))\longrightarrow\mbox{ Pic }\E$ is the map given in \cite[Lemmas 8.1 and~8.2]{Shioda_MW}, and $\cdot$ is the intersection pairing in the Picard group of~$\E$. We call $\langle\cdot,\cdot\rangle_h$ the \textsl{height pairing} on $E(k(T))$. 

\begin{lemma}\label{heightpairingisdotproduct}
For two exceptional curves in Pic $S$, the height pairing of the corresponding sections in the Mordell--Weil group of $\E$ is the same as the dot product of the roots in the root system \textbf{E$_8$} associated to these exceptional curves under the bijection (\ref{eq:bijection C E}).
\end{lemma}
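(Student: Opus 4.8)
The plan is to evaluate both sides of the claimed equality explicitly and match them term by term. For the left-hand side I would start from Shioda's explicit formula for the height pairing on a rational elliptic surface: for two sections $P,Q$ of $\E$ with zero section $O=\tilde{\mathcal{O}}$,
\[
\langle P,Q\rangle_h=\chi(\E)+(P\cdot O)+(Q\cdot O)-(P\cdot Q)-\sum_v\mathrm{contr}_v(P,Q),
\]
where the sum runs over the points $v\in\mathbb{P}^1$ carrying a reducible fiber and $\cdot$ is the intersection pairing on $\E$ (\cite[Theorem 8.6]{Shioda_MW}). The decisive simplification is that $\E$ has \emph{only irreducible fibers}: the local contribution $\mathrm{contr}_v(P,Q)$ vanishes whenever the fiber over $v$ is irreducible, since there is then a unique component and every section meets the identity component. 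Hence the whole correction sum drops out, and together with $\chi(\E)=1$ for a rational elliptic surface the formula collapses to $\langle P,Q\rangle_h=1+(P\cdot O)+(Q\cdot O)-(P\cdot Q)$.

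Next I would compute the three surviving intersection numbers for $P=\tilde{C}_1=\pi^*(C_1)$ and $Q=\tilde{C}_2=\pi^*(C_2)$, the strict transforms of exceptional curves $C_1,C_2$ on $X$. By Remark \ref{exceptional curves are sections} no exceptional curve passes through $\mathcal{O}$, so each strict transform is disjoint from the exceptional divisor $\tilde{\mathcal{O}}=O$, giving $P\cdot O=Q\cdot O=0$. Because $C_1$ and $C_2$ both avoid $\mathcal{O}$, their strict transforms coincide with their total transforms $\pi^*C_1,\pi^*C_2$, so $P\cdot Q=\pi^*C_1\cdot\pi^*C_2=C_1\cdot C_2$, the intersection number computed on $X$. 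Substituting yields $\langle P,Q\rangle_h=1-C_1\cdot C_2$. On the other hand, the dot product of the associated roots is $\langle C_1+K_X,C_2+K_X\rangle=1-C_1\cdot C_2$, exactly the identity recorded just after the bijection (\ref{eq:bijection C E}). The two quantities agree, which is the claim.

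The only genuinely load-bearing step — and the one I would spell out most carefully — is the vanishing of the correction terms $\mathrm{contr}_v(P,Q)$, which rests entirely on the fact that $\E$ has only irreducible fibers; this is precisely the feature guaranteed by the del Pezzo degree $1$ construction. Everything after that is routine bookkeeping with the blow-up $\pi$ and Remark \ref{exceptional curves are sections}. As a sanity check I would also verify the diagonal case $C_1=C_2$ separately: the self-pairing formula specializes to $\langle P,P\rangle_h=2\chi(\E)+2(P\cdot O)=2$, which matches $\langle C_1+K_X,C_1+K_X\rangle=2$ since $C_1+K_X$ is a root of $\mathbf{E}_8$.
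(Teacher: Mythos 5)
Your proof is correct and takes essentially the same route as the paper: both arguments use that $\E$ has only irreducible fibers and that exceptional curves avoid the zero section to reduce the height pairing of the strict transforms to $\langle \tilde C_1,\tilde C_2\rangle_h = 1 - C_1\cdot C_2$, which is then matched with the root dot product $\langle C_1+K_X,C_2+K_X\rangle = 1 - C_1\cdot C_2$ under the bijection (\ref{eq:bijection C E}). The only difference is the entry point into \cite{Shioda_MW} — you invoke the closed-form height formula of Theorem 8.6 (with $\chi(\E)=1$ and vanishing local contributions), whereas the paper computes $\varphi_h(C_1)\cdot\varphi_h(C_2)$ directly from the explicit description of $\varphi_h$ in Lemma 8.1; these are equivalent packagings of the same computation, and your separate verification of the diagonal case via the self-height formula is a sound extra check.
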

\begin{proof}This statement follows directly from the isomorphism between the Mordell--Weil lattice of $\E$ and \textbf{E$_8$}; see Remark \ref{rem: isomorphism of lattices}. We here illustrate how one can also see it from the properties of the height pairing.
Let $C_1,C_2$ be two sections of $\E$ that are strict transforms of exceptional curves $c_1,c_2$ in $S$. Since $\E$ has no reducible fibers, by \cite[Lemma 8.1]{Shioda_MW} we have $$\varphi_h(C_1)\cdot\varphi_h(C_2)=([C_1]-[\tilde{\mathcal{O}}]-F)\cdot([C_2]-[\tilde{\mathcal{O}}]-F),$$
where $[C_1],[C_2],[\tilde{\mathcal{O}}]$ are the classes of $C_1,C_2$, and the zero section, respectively, and $F$ is the class of a fiber. This gives  
$$\varphi_h(C_1)\cdot\varphi_h(C_2)=[C_1]\cdot[C_2]-1,$$ where we use that the zero section is an exceptional curve, and it is disjoint from $C_1$ and $C_2$ (Remark \ref{exceptional curves are sections}). We conclude that we have $\langle C_1,C_2\rangle_h =1-[C_1]\cdot[C_2]$. Since $C_1,C_2$ are disjoint from~$\tilde{\mathcal{O}}$, the intersection pairing of $C_1$ and $C_2$ in Pic $\E$ is the same as the intersection pairing of $c_1$ and $c_2$ in Pic $S$. The statement now follows from the bijection (\ref{eq:bijection C E}). \end{proof}

\begin{remark}\label{rem: isomorphism of lattices}
Lemma \ref{heightpairingisdotproduct} is true for every section of the elliptic fibration, not only the ones coming from the lines on the del Pezzo surfaces of degree 1. The key is to see that $E(\overline{\Q}(t))/E(\overline{\Q}(t))_{tors}$ and $NS(\E)/T$ are isomorphic as lattices, where $NS(\E)$ is the N\'eron-Severi lattice and $T$ is the trivial sublattice of $NS(\E)$ (generated by the zero section and fibre components). Moreover, the latter is isomorphic to $K^{\perp}_{S}$. See for instance in \cite[Section 7.4.]{VAZ} for a proof of these facts. We thank an anonymous referee for pointing this out to us.
\end{remark}

Let $M$ be the \textsl{height pairing matrix} of $e_1,\ldots,e_n$, that is, $M$ is the $n\times n$ matrix with entries $M_{ij}=\langle L_i,L_j\rangle_h$ for $i,j\in\{1,\ldots,n\}$.

\begin{lemma}\label{rankandkernel}
The kernel of the matrix $M$ contains a vector $(a_1,\ldots,a_n)$ in $\mathbb{Z}^n$ with $a_1+\cdots+a_n\neq0$.
\end{lemma}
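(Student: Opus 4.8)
The plan is to recognise $M$ as the Gram matrix of a system of roots in $\mathbf{E}_8$ and then exploit the fact that concurrency forces this system to be \emph{obtuse} (pairwise non-positive inner products), which is exactly what produces a relation with nonzero coordinate sum.

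First I would reformulate $M$ in the Euclidean space $K_X^\perp$. For each $i$ write $\rho_i = L_i + K_X \in K_X^\perp$ for the root associated to $L_i$ under the bijection (\ref{eq:bijection C E}). By Lemma \ref{heightpairingisdotproduct} we have $M_{ij} = \langle \tilde{L}_i, \tilde{L}_j\rangle_h = \langle \rho_i, \rho_j\rangle$, so $M$ is precisely the Gram matrix of $\rho_1,\ldots,\rho_n$ for the form $\langle\cdot,\cdot\rangle$. In particular, for any $a = (a_1,\ldots,a_n)$ one has $a^{T} M a = \langle \sum_i a_i\rho_i,\, \sum_i a_i\rho_i\rangle$, so by positive-definiteness of $\langle\cdot,\cdot\rangle$ on $K_X^\perp$ we get $a \in \ker M$ if and only if $\sum_i a_i\rho_i = 0$; that is, $\ker M$ is exactly the module of linear relations among the $\rho_i$. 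Since the $\rho_i$ lie in the $8$-dimensional space $K_X^\perp$ and $n \geq 9$, they are linearly dependent, so $\ker M \neq 0$; as $M$ has integer entries, $\ker M$ contains a nonzero integer vector.

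Next I would record the one place concurrency enters. Because $L_1,\ldots,L_n$ are distinct irreducible curves all passing through $P$, the local intersection multiplicity at $P$ gives $L_i \cdot L_j \geq 1$ for $i \neq j$. Using $\langle \rho_i,\rho_j\rangle = 1 - L_i\cdot L_j$ (and $\langle \rho_i,\rho_i\rangle = 2$) this means $M_{ij} \leq 0$ for all $i \neq j$: the roots $\rho_1,\ldots,\rho_n$ form a system of vectors with pairwise non-positive inner products. The main step is then the standard obtuse-vectors argument, which upgrades an arbitrary relation to one with one-signed coefficients. Starting from a nonzero integer relation $\sum_i a_i\rho_i = 0$ in $\ker M$, split the indices into $P = \{i : a_i > 0\}$ and $N = \{i : a_i < 0\}$. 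Then $u := \sum_{i\in P} a_i\rho_i = \sum_{j\in N}(-a_j)\rho_j$, and pairing the two expressions gives $\langle u,u\rangle = \sum_{i\in P}\sum_{j\in N} a_i(-a_j)\langle \rho_i,\rho_j\rangle \leq 0$, since every cross term has $i \neq j$, hence $\langle \rho_i,\rho_j\rangle \leq 0$, while $a_i(-a_j) > 0$. By positive-definiteness $u = 0$, so each of $\sum_{i\in P} a_i\rho_i$ and $\sum_{j\in N}(-a_j)\rho_j$ is itself a relation. At least one of $P,N$ is non-empty; taking that part (after negating if necessary) yields a nonzero integer vector $b \in \ker M$ with nonnegative entries, whence $\sum_i b_i > 0 \neq 0$, as required.

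I expect the conceptual obstacle to be exactly the one flagged in Remark \ref{rem: rank 8 so 9 gives torsion}: the dimension count alone produces \emph{some} relation, but it might have vanishing coordinate sum, which is useless for deducing torsion. The real content is that concurrency forces the off-diagonal entries of $M$ to be non-positive, and this obtuseness is precisely what guarantees a relation with strictly positive coordinate sum. I would double-check that no hypothesis beyond concurrency is used here — in particular that the argument does not secretly rely on $P$ lying off the ramification curve of $\varphi$, a condition that is only needed afterwards to convert the relation into the torsion conclusion of Theorem \ref{thmtorsion}.
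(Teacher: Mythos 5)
Your proof is correct, but it takes a genuinely different route from the paper's. You reduce everything to linear algebra in $K_X^\perp$: by Lemma \ref{heightpairingisdotproduct}, $M$ is the Gram matrix of the roots $\rho_i=L_i+K_X$ for the positive definite form $\langle\cdot,\cdot\rangle$, so $\ker M$ is the module of relations among the $\rho_i$; dependence is forced by $n\geq 9>8$, and concurrency gives $L_i\cdot L_j\geq 1$, i.e.\ $M_{ij}=1-L_i\cdot L_j\leq 0$ for $i\neq j$, after which the classical obtuse-vectors splitting (pair the positive part of a relation against the negative part, conclude each part is itself a relation) produces a nonzero integer kernel vector with nonnegative entries, hence strictly positive coordinate sum. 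The paper instead argues by classification: it uses the hypothesis that $P$ lies off the ramification curve to invoke \cite[Remark 2.11]{vLW} and embed the $n$ curves in a maximal clique of $G$ with only weights $1$ and $2$, then appeals to the classification in \cite{WvL} of all $18$ isomorphism types of such maximal cliques of size $\geq 9$, computes the kernels of their Gram matrices with \texttt{magma} (Table \ref{kernels}), and checks case by case that some kernel vector supported on the chosen $n$ vertices has nonzero sum. Your argument is shorter, computation-free, and actually needs fewer hypotheses: it tolerates intersection multiplicity $3$ (dot product $-2$ is still $\leq 0$), so unlike the paper's proof it does not use that $P$ avoids the ramification curve --- your closing sanity check on this point is exactly right. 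Two things the paper's heavier route buys: the explicit relations in Table \ref{kernels} are concrete data of the same kind reused in the size-$8$ analysis (Remark \ref{rem:excluding cliques size 8}), and the case-by-case kernels show precisely which configurations admit which relations; note also that your dimension count is where $n\geq 9$ is irreducibly used, consistent with the fact that for $n=8$ the Gram matrix can be nonsingular and the question remains open. One small point to make explicit in a final write-up: the $L_i$ are pairwise distinct (otherwise $M_{ij}=2>0$ for a repeated pair), which is implicit in ``a set of exceptional curves concurrent in $P$'' and is what legitimizes $L_i\cdot L_j\geq 1$ via the local intersection multiplicity at $P$.
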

\begin{proof}
Recall the complete weighted graphs $G$ and $\Gamma$ as defined in Definition \ref{defgammaG}. Since $Q$ lies outside the ramification curve of $\varphi$, where $\varphi$ is the morphism in Remark \ref{remramcurve}, the exceptional curves $e_1,\ldots,e_n$ correspond to a clique of size $n$ in $G$ that is contained in a maximal clique in $G$ with only edges of weights 1 and 2 \cite[Remark 2.11]{vLW}. The latter corresponds to a maximal clique~$C$ in $\Gamma$ with only edges of weights -1 and 0 by the bijection (\ref{eq:bijection C E}). Since  $n\geq9$, the clique~$C$ has size at least 9. The table in \cite[Appendix A]{WvL} contains all isomorphism types of maximal cliques in $\Gamma$ with only edges of weights -1 and 0 and of size at least 9 \cite[Proposition 21]{WvL}; there are 11 maximal cliques of size~9, which we call $\alpha_1,\ldots,\alpha_{11}$ in the order that they appear in the table, there are~6 maximal cliques of size 10, which we call $\beta_1,\ldots,\beta_6$ in the order that they appear in the table, and there is 1 maximal clique of size 12, which we call $\gamma$. For each of these 18 cliques, whose elements correspond to roots in \textbf{E}$_8$, we compute its Gram matrix, which is the matrix where the entry $(i,j)$ is the dot product of the roots corresponding to the $i$-th and $j$-th vertex in the clique after choosing an ordering on the vertices. With \texttt{magma} we find the generators for the kernels of these matrices. The results are in Table \ref{kernels}. Let $r$ be the number of vertices of $C$, and let $N$ be the Gram matrix of~$C$; then the kernel of $N$ is equal to one of the 18 kernels in the table, after rearranging the order of the vertices in $C$ if necessary. Since we have $n\geq9$, we see from Table~\ref{kernels} that for any subset of $n$ vertices in $C$, there is a vector $(a_1,\ldots,a_r)$ in the kernel of $N$ which is~0 outside the entries corresponding to the $n$ vertices, and such that $a_1+\cdots+a_r\neq0$. By Lemma~\ref{heightpairingisdotproduct}, this gives a vector in the kernel of $M$ as claimed.
\end{proof} 

\begin{table}[!h]
\begin{center}
\begin{tabular}{c|c}
Clique & Basis for the kernel\\
\hline
$\alpha_1$ & $\{(1, 1, 0, 0, 0, 0, 1, 0, 1),(0, 0, 1, 1, 1, 1, 0, 2, 0)\}$\\
\hline
$\alpha_2$ & $\{(1, 0, 1, 0, 0, 1, 0, 0, 1),(0, 0, 0, 1, 1, 0, 1, 0, 0)\}$\\
\hline
$\alpha_3$ & 
$\{(1, 1, 1, 0, 0, 1, 0, 0, 1),(0, 0, 0, 1, 1, 0, 1, 1, 0)\}$\\ 
\hline
$\alpha_4$ & 
$\{(1, 1, 0, 1, 0, 0, 1, 0, 1),(0, 0, 1, 0, 0, 1, 0, 1, 0)\}$\\ 
\hline
$\alpha_5$ & $\{(2, 1, 1,   0 ,  2 ,  0,   0, 1, 1),(0, 0, 0, 1, 0, 1, 1, 0, 0)
\}$\\ 
\hline
$\alpha_6$ & $\{    (1, 1, 1, 1, 1, 1, 1, 1, 1)\}$\\
\hline
$\alpha_7$ & 
$\{    (1, 1, 1, 0, 1, 1, 1, 1, 1)\}$\\ 
\hline
$\alpha_8$ & $   \{ (0, 1, 1, 2, 2, 2, 1, 1, 0)\}$\\
\hline
$\alpha_9$ & $\{    (2, 1, 1, 1, 1,   2 ,  2,   2,   2))\}$\\
\hline
$\alpha_{10}$ & 
$\{  ( 2,   2,   0, 3, 1,   4,   2, 3, 1)\}$\\
\hline
$\alpha_{11}$ & $\{ (  6, 3, 1, 4, 4, 2, 2, 5, 3)\}$\\ 
\hline
$\beta_1$ & $\{(1, 0, 1, 0, 0, 2, 1, 0, 0, 1),(0, 1, 0, 1, 2, 0,0, 1, 1, 0)\}$\\
\hline
$\beta_2$ & $\{(1, 1, 0, 0, 0, 0, 0, 0, 1, 1),(0, 0, 0, 0, 1, 1, 1, 1, 0, 0)\}$\\
\hline
$\beta_3$ & $\{(1, 1, 0, 1, 0, 0, 0, 1, 0, 1),(0, 0, 1, 0, 1, 1, 1, 0, 1, 0)\}$\\
\hline
$\beta_4$ & $\{(1, 1, 0, 1, 0, 1, 0, 0, 1, 1),(0, 0, 0, 0, 1, 0, 1, 1, 0, 0)\}$\\
\hline
$\beta_5$ & $\{(1, 1, 0, 0, 0, 0, 0, 0, 1, 1),(0, 0, 1, 1, 1, 1, 2, 2, 0, 0)\}$\\
\hline
$\beta_6$ & $\{(2, 1, 3, 0, 2, 0, 2, 0, 1, 1),(0, 0, 0, 1, 0, 1, 0, 1, 0, 0)\}$\\
\hline
$\gamma$ & $\{(1, 1, 0, 0, 0, 0, 0, 0, 0, 0, 0, 1),(0, 0, 1, 0, 0, 1, 0, 0, 0, 0, 1, 0),$\\
&$(0, 0, 0, 1, 0, 0, 0, 1, 1, 0, 0, 0),(0, 0, 0, 0, 1, 0, 1, 0, 0, 1, 0, 0)\}$
\end{tabular}
\caption{Bases}
\label{kernels}
\end{center}
\end{table}

\begin{proofofthmtorsion}
Let $P$ be a point on $S$. If $P$ is contained in the ramification curve of the morphism induced by the linear system of the bi-anticanonical divisor, then $P_{\E}$ is torsion (Remark \ref{remramcurve}), and we are done. Now assume that $P$ is not contained in this ramification curve, and that there is a set of at least 9 exceptional curves that are concurrent in $P$. Let $K_1,\ldots,K_n$ be the corresponding sections of~$\E$, and let $N$ be the height pairing matrix of these sections. Let $(a_1,\ldots,a_n)\in\mathbb{Z}^n$ be a vector in the kernel of $N$ such that $a_1+\cdots+a_n\neq0$, which exists by Lemma~\ref{rankandkernel}. Then for all $i\in\{1,\ldots,n\}$, we have that
$$a_1\langle K_i,K_1\rangle_h+\cdots+a_n\langle K_i,K_n\rangle_h=0,$$ and since the height pairing is bilinear this implies
\begin{equation}\langle K_i,a_1K_1+a_2K_2+\cdots+a_nK_n\rangle_h=0\mbox{ for all }i\in\{1,\ldots,n\},
\end{equation}
which implies $$\langle a_1K_1+a_2K_2+\cdots+a_nK_n,a_1K_1+a_2K_2+\cdots+a_nK_n\rangle_h=0.$$
From the latter we conclude that $a_1K_1+a_2K_2+\cdots+a_nK_n$ is torsion in the Mordell--Weil group of $\E$ \cite[Theorem 8.4]{Shioda_MW}, and since the torsion subgroup is trivial \cite[Theorem 10.4]{Shioda_MW}, we conclude that $$a_1K_1+a_2K_2+\cdots+a_nK_n=0.$$ Since for all $i$ in $\{1,\ldots,n\}$, the section $K_i$ contains the point $P_{\E}$, we have, on the fiber of $P_{\E}$, the equality $(a_1+\cdots+a_n)P_{\E}=0$. Since $a_1+\cdots+a_n\neq0$, this implies that $P_{\E}$ is torsion on its fiber. \qed\end{proofofthmtorsion}

\section{The case of $8$ lines}\label{Sec: 8lines}

Given that the rank of the Mordell-Weil group of the elliptic surface arising from a del Pezzo surface of degree 1 is 8 (see also Remark \ref{rem: rank 8 so 9 gives torsion}), we conjecture that points in the intersection of 8 lines are not always torsion, that is, we expect to find an example  of a del Pezzo surface with a non-torsion point contained in the intersection of 8 lines. However, we have not yet found such an example, nor do we have a proof that every point contained in the intersection of 8 lines is torsion on its fiber. In this section we prove Theorem \ref{thm: case8lines}: we show that 8 lines that are concurrent in a non-torsion point can only intersect each other according to one of 15 specific configurations, which reduces to 13 in characteristic 0. 
We also give strategies for searching for examples with non-torsion points within these cliques, or eliminating more cliques.

\vspace{11pt}

Let $S$ be a del Pezzo surface of degree 1 over an algebraically closed field, and let $\E$ be the associated elliptic surface. Let $G$ be the weigthed graph on the 240 exceptional curves as in Definition \ref{defgammaG}. Note that its automorphism group, the Weyl group $W_8$, acts on the set of cliques of size 8 with only edges of weights 1 and 2. 

\begin{proposition}\label{prop: 47 cliques of size 8}
There are $47$ orbits under the action of $W_8$ of cliques of size 8 with only edges of weights 1 and 2 in $G$. Their isomorphism types are represented in Figure \ref{table cliques}.
\end{proposition}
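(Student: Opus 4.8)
The plan is to reduce the statement to a finite, explicit computation in the root system $\mathbf{E}_8$ and then carry it out in \texttt{magma}. By Remark~\ref{remgammaG} the weighted graphs $G$ and $\Gamma$ are isomorphic, an edge of weight $w\in\{0,1,2,3\}$ in $G$ corresponding to an edge of weight $1-w\in\{1,0,-1,-2\}$ in $\Gamma$, and this isomorphism is $W_8$-equivariant on the nose since both automorphism groups are identified with $W_8$. Hence a clique of size $8$ in $G$ all of whose edges have weight $1$ or $2$ corresponds to a set of $8$ roots of $\mathbf{E}_8$ whose pairwise dot products all lie in $\{0,-1\}$, and two such cliques lie in the same $W_8$-orbit in $G$ if and only if the corresponding root configurations lie in the same $W_8$-orbit in $\Gamma$. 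So it suffices to count the $W_8$-orbits of $8$-element sets of roots with pairwise dot products in $\{0,-1\}$.

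First I would fix a concrete model of $\mathbf{E}_8$: its $240$ roots as explicit vectors, the Weyl group $W_8$ as the permutation group they generate, and the compatibility relation $r\sim s \iff \langle r,s\rangle\in\{0,-1\}$. The target is then the set of $8$-cliques in the graph on the $240$ roots with $\sim$ as its edge set, taken up to the induced $W_8$-action.

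Rather than enumerate all $\binom{240}{8}$ subsets directly, I would build orbit representatives incrementally by clique size, with isomorph rejection. Since $W_8$ acts transitively on roots there is a single orbit of $1$-cliques; given a set $\mathcal{R}$ of orbit representatives of $k$-cliques, I would, for each $C\in\mathcal{R}$, compute its stabilizer $\mathrm{Stab}_{W_8}(C)$, determine the roots compatible with every vertex of $C$, split those into $\mathrm{Stab}_{W_8}(C)$-orbits, and adjoin one representative from each orbit to form candidate $(k+1)$-cliques. Fusing candidates that are $W_8$-conjugate then yields the orbit representatives of size $k+1$, and iterating to $k=8$ should produce exactly $47$ orbits. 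As an independent structural check, every such clique is contained in a maximal clique with only edges of weights $1$ and $2$ (\cite[Remark 2.11]{vLW}), whose isomorphism types are classified in \cite{WvL}; one can alternatively recover the $8$-cliques as $8$-subsets of representatives of these maximal cliques and re-fuse, which should return the same count.

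The main obstacle is not a single hard idea but guaranteeing that the enumeration is simultaneously complete and non-redundant: the stabilizer and orbit computations take place inside a group of order $696729600$, and they must be organized so that no orbit of cliques is missed and none is double-counted. The delicate step is the fusion of the candidate $(k+1)$-cliques, that is, deciding $W_8$-conjugacy of two root configurations; this is handled with the orbit--stabilizer and conjugacy-testing machinery of \texttt{magma}, and the final tally of $47$ is what the computation returns.
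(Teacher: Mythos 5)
Your proposal is correct, and it establishes the right statement, but it organizes the computation genuinely differently from the paper. The paper's proof first observes that every size-$8$ clique with edge weights in $\{1,2\}$ must contain an edge of weight $1$ (since cliques with only weight-$2$ edges have size at most $3$, by \cite[Lemma 7]{WvL}), then uses the transitivity of $W_8$ on pairs of curves meeting with multiplicity $1$ \cite[Proposition 6]{WvL} to fix two such curves $e_1,e_2$, computes the $136$ exceptional curves meeting both with multiplicity $1$ or $2$, enumerates all $8963624$ cliques of size $6$ in the resulting compatibility graph in one shot, and only then fuses these nine million size-$8$ cliques into $W_8$-orbits, pre-sorting by stabilizer size as an invariant to make the fusion feasible (a two-week computation). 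You instead grow orbit representatives level by level from single roots, extending each representative by $\mathrm{Stab}_{W_8}(C)$-orbits of compatible roots and fusing conjugates at every level; the completeness argument you sketch (any $(k+1)$-clique contains a $k$-clique conjugate to a stored representative, and the extension vertex is covered up to the stabilizer action) is sound, and your route needs neither the weight-$1$-edge lemma nor the transitivity-on-pairs result, only transitivity of $W_8$ on the $240$ roots. What each approach buys: the paper's method is a brute-force enumeration whose correctness is very transparent but whose fusion step is enormous; your incremental isomorph-rejection keeps every fusion problem small and is plausibly far cheaper, at the cost of a slightly more delicate correctness argument, which you supply. Since both reduce the proposition to finite conjugacy computations in a group of order $696729600$, the remaining content in either case is that the code is trustworthy, and your proposed independent cross-check is therefore welcome --- but note one caveat there: the table of \cite{WvL} invoked in this paper classifies maximal cliques with edge weights in $\{1,2\}$ of size \emph{at least} $9$ only, so recovering all $8$-cliques as $8$-subsets of those maximal representatives additionally requires knowing that no maximal clique of this type has size exactly $8$ (or enumerating any such separately); also, containment of a clique in a maximal clique of the same weight type is immediate from finiteness and does not need \cite[Remark 2.11]{vLW}, which concerns concurrency and the ramification curve rather than graph-theoretic maximality.
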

\begin{proof}Every clique in $G$ of size 8 with only edges of weights 1 and 2 contains at least one edge of weight 1, since the maximal size of cliques in $G$ with only edges of weight 2 is 3, which follows from \cite[Lemma 7]{WvL}, using the bijection (\ref{eq:bijection C E}). Using this same bijection, it follows from \cite[Proposition 6]{WvL} that $W_8$ acts transitively on the set of pairs of exceptional curves that intersect with multiplicity 1, so we fix two such curves; let $e_1$ be the strict transform on $S$ of the curve $L_{1,2}$ in $\mathbb{P}^2$, and $e_2$ the strict transform of $L_{3,4}$, where we use Notation \ref{notation curves P^2}. It follows that every clique of size 8 with only edges of weights 1 and 2 in $G$ is conjugate under the action of $W_8$ to a clique containing $e_1$ and $e_2$. With \texttt{magma} we compute that there are 136 exceptional curves that intersect both $e_1$ and $e_2$ with multiplicity 1 or 2. We define the graph $H$  with vertices these 136 exceptional curves, with an edge between two vertices if they correspond to exceptional curves intersecting with multiplicity 1 or 2, and no edge otherwise. The function \texttt{AllCliques(H,6,false)} in the \texttt{magma} code \cite{MagmaCode} gives all (not necessarily maximal) cliques of size 6 in this graph; there are 8963624 of them. We conclude that there are 8963624 cliques in $G$ of size 8 with only edges of weights 1 and 2, that contain $e_1$ and $e_2$. This set $A$ of cliques contains a representative for each $W_8$-orbit of cliques of size 8 with only edges of weights 1 and 2 in $G$, and we want to find a set of such representatives. To reduce computing time we first sort all cliques in $A$ according to the size of their stabilizer in $W_8$. This gives 20 different sets $A_1,\ldots,A_{20}$, where each set contains only cliques in $A$ with the same stabilizer size. Finally, for each of these sets $A_i$, we check whether the cliques inside are conjugate under the action of $W_8$, and end up with one representative for each $W_8$-orbit of the cliques in $A_i$. Doing this for all $A_i$ takes a very long time; we let \texttt{magma} run for two weeks straight on the compute servers of the Max Planck Institute for Mathematics in the Sciences, Leipzig. The output gave 47 cliques of size 8 with only edges of weight 1 and 2, where each clique is a representative for a different $W_8$-orbit. The isomorphism types follow from the pairwise intersection multiplicity of the exceptional curves.
\end{proof}

Figure \ref{table cliques} contains the isomorphism types of 47 cliques in $G$, one representative for each of the 47 orbits in Proposition \ref{prop: 47 cliques of size 8}; there are 45 different isomorphism types. All graphs are fully connected subgraphs of $G$ with edges of weights 2 (the ones that are drawn) and~1 (all other edges). 

\vspace{22pt}

\begin{center}
\begingroup
\begin{tabular}{ccccccccc}
\raisebox{1pt}{\begin{tikzpicture} [scale=0.3]
 \node [draw,circle,fill,inner sep=0pt,minimum size=2pt](t1) at (0,0) {};
 \node [draw,circle,fill,inner sep=0pt,minimum size=2pt](t2) at (1,0) {};
 \node [draw,circle,fill,inner sep=0pt,minimum size=2pt](t3) at (2,0) {};
 \node [draw,circle,fill,inner sep=0pt,minimum size=2pt](t3) at (3,0) {};
 \node [draw,circle,fill,inner sep=0pt,minimum size=2pt](t1) at (4,0) {};
 \node [draw,circle,fill,inner sep=0pt,minimum size=2pt](t2) at (5,0) {};
 \node [draw,circle,fill,inner sep=0pt,minimum size=2pt](t3) at (6,0) {};
 \node [draw,circle,fill,inner sep=0pt,minimum size=2pt](t3) at (7,0) {};
 \path[every node/.style={font=\sffamily\small}]
	(0,0) edge node {} (1,0)     
     (1,0) edge node {} (2,0)
     (2,0) edge node {} (3,0)
     (3,0) edge node {} (4,0)
     (4,0) edge node {} (5,0)
     (5,0) edge node {} (6,0)
     (6,0) edge node {} (7,0);
 \end{tikzpicture}}
 &&
 \raisebox{1pt}{\begin{tikzpicture} [scale=0.3]
 \node [draw,circle,fill,inner sep=0pt,minimum size=2pt](t1) at (0,0) {};
 \node [draw,circle,fill,inner sep=0pt,minimum size=2pt](t2) at (1,0) {};
 \node [draw,circle,fill,inner sep=0pt,minimum size=2pt](t3) at (2,0) {};
 \node [draw,circle,fill,inner sep=0pt,minimum size=2pt](t3) at (3,0) {};
 \node [draw,circle,fill,inner sep=0pt,minimum size=2pt](t1) at (4,0) {};
 \node [draw,circle,fill,inner sep=0pt,minimum size=2pt](t2) at (5,0) {};
 \node [draw,circle,fill,inner sep=0pt,minimum size=2pt](t3) at (6,0) {};
 \node [draw,circle,fill,inner sep=0pt,minimum size=2pt](t3) at (7,0) {};
 \path[every node/.style={font=\sffamily\small}]
	(0,0) edge node {} (1,0)     
     (1,0) edge node {} (2,0)
     (2,0) edge node {} (3,0)
     (3,0) edge node {} (4,0)
     (4,0) edge node {} (5,0)
     (5,0) edge node {} (6,0);
 \end{tikzpicture}}
 &&
 \raisebox{1pt}{\begin{tikzpicture} [scale=0.3]
 \node [draw,circle,fill,inner sep=0pt,minimum size=2pt](t1) at (0,0) {};
 \node [draw,circle,fill,inner sep=0pt,minimum size=2pt](t2) at (1,0) {};
 \node [draw,circle,fill,inner sep=0pt,minimum size=2pt](t3) at (2,0) {};
 \node [draw,circle,fill,inner sep=0pt,minimum size=2pt](t3) at (3,0) {};
 \node [draw,circle,fill,inner sep=0pt,minimum size=2pt](t1) at (4,0) {};
 \node [draw,circle,fill,inner sep=0pt,minimum size=2pt](t2) at (5,0) {};
 \node [draw,circle,fill,inner sep=0pt,minimum size=2pt](t3) at (6,0) {};
 \node [draw,circle,fill,inner sep=0pt,minimum size=2pt](t3) at (7,0) {};
 \path[every node/.style={font=\sffamily\small}]
	(0,0) edge node {} (1,0)     
     (1,0) edge node {} (2,0)
     (2,0) edge node {} (3,0)
     (6,0) edge node {} (7,0)
     (4,0) edge node {} (5,0)
     (5,0) edge node {} (6,0);
 \end{tikzpicture}}
&&
\raisebox{1pt}{\begin{tikzpicture} [scale=0.3]
 \node [draw,circle,fill,inner sep=0pt,minimum size=2pt](t1) at (0,0) {};
 \node [draw,circle,fill,inner sep=0pt,minimum size=2pt](t2) at (1,0) {};
 \node [draw,circle,fill,inner sep=0pt,minimum size=2pt](t3) at (2,0) {};
 \node [draw,circle,fill,inner sep=0pt,minimum size=2pt](t3) at (3,0) {};
 \node [draw,circle,fill,inner sep=0pt,minimum size=2pt](t1) at (4,0) {};
 \node [draw,circle,fill,inner sep=0pt,minimum size=2pt](t2) at (5,0) {};
 \node [draw,circle,fill,inner sep=0pt,minimum size=2pt](t3) at (6,0) {};
 \node [draw,circle,fill,inner sep=0pt,minimum size=2pt](t3) at (7,0) {};
 \path[every node/.style={font=\sffamily\small}]
	(0,0) edge node {} (1,0)     
     (1,0) edge node {} (2,0)
     (2,0) edge node {} (3,0)
     (4,0) edge node {} (5,0)
     (5,0) edge node {} (6,0);
 \end{tikzpicture}}
 &&
  \raisebox{1pt}{\begin{tikzpicture} [scale=0.3]
 \node [draw,circle,fill,inner sep=0pt,minimum size=2pt](t1) at (0,0) {};
 \node [draw,circle,fill,inner sep=0pt,minimum size=2pt](t2) at (1,0) {};
 \node [draw,circle,fill,inner sep=0pt,minimum size=2pt](t3) at (2,0) {};
 \node [draw,circle,fill,inner sep=0pt,minimum size=2pt](t3) at (3,0) {};
 \node [draw,circle,fill,inner sep=0pt,minimum size=2pt](t1) at (4,0) {};
 \node [draw,circle,fill,inner sep=0pt,minimum size=2pt](t2) at (5,0) {};
 \node [draw,circle,fill,inner sep=0pt,minimum size=2pt](t3) at (6,0) {};
 \node [draw,circle,fill,inner sep=0pt,minimum size=2pt](t3) at (7,0) {};
 \path[every node/.style={font=\sffamily\small}]
	(0,0) edge node {} (1,0)     
     (1,0) edge node {} (2,0)
     (2,0) edge node {} (3,0)
     (3,0) edge node {} (4,0)
     (5,0) edge node {} (6,0);
 \end{tikzpicture}} \\
1&&2&&3&&4&&5\\
&&&&&&&&\\
\raisebox{1pt}{\begin{tikzpicture} [scale=0.3]
 \node [draw,circle,fill,inner sep=0pt,minimum size=2pt](t1) at (0,0) {};
 \node [draw,circle,fill,inner sep=0pt,minimum size=2pt](t2) at (1,0) {};
 \node [draw,circle,fill,inner sep=0pt,minimum size=2pt](t3) at (2,0) {};
 \node [draw,circle,fill,inner sep=0pt,minimum size=2pt](t4) at (3,0) {};
 \node [draw,circle,fill,inner sep=0pt,minimum size=2pt](t5) at (4,0) {};
 \node [draw,circle,fill,inner sep=0pt,minimum size=2pt](t6) at (5,0) {};
 \node [draw,circle,fill,inner sep=0pt,minimum size=2pt](t7) at (6,0) {};
 \node [draw,circle,fill,inner sep=0pt,minimum size=2pt](t8) at (7,0) {};
 \path[every node/.style={font=\sffamily\small}]
	(0,0) edge node {} (1,0)     
     (2,0) edge node {} (3,0)
     (4,0) edge node {} (5,0)
     (6,0) edge node {} (7,0);
 \end{tikzpicture}}
&&
\raisebox{3pt}{\begin{tikzpicture} [scale=0.3]
 \node [draw,circle,fill,inner sep=0pt,minimum size=2pt](t1) at (0,0) {};
 \node [draw,circle,fill,inner sep=0pt,minimum size=2pt](t2) at (1,0) {};
 \node [draw,circle,fill,inner sep=0pt,minimum size=2pt](t3) at (2,0) {};
 \node [draw,circle,fill,inner sep=0pt,minimum size=2pt](t4) at (3,0) {};
 \node [draw,circle,fill,inner sep=0pt,minimum size=2pt](t5) at (4,0) {};
 \node [draw,circle,fill,inner sep=0pt,minimum size=2pt](t6) at (5,0) {};
 \node [draw,circle,fill,inner sep=0pt,minimum size=2pt](t7) at (6,0) {};
 \node [draw,circle,fill,inner sep=0pt,minimum size=2pt](t8) at (7,0) {};
 \end{tikzpicture}}
&&
\raisebox{1pt}{\begin{tikzpicture} [scale=0.3]
 \node [draw,circle,fill,inner sep=0pt,minimum size=2pt](t1) at (0,0) {};
 \node [draw,circle,fill,inner sep=0pt,minimum size=2pt](t2) at (1,0) {};
 \node [draw,circle,fill,inner sep=0pt,minimum size=2pt](t3) at (2,0) {};
 \node [draw,circle,fill,inner sep=0pt,minimum size=2pt](t4) at (3,0) {};
 \node [draw,circle,fill,inner sep=0pt,minimum size=2pt](t5) at (4,0) {};
 \node [draw,circle,fill,inner sep=0pt,minimum size=2pt](t6) at (5,0) {};
 \node [draw,circle,fill,inner sep=0pt,minimum size=2pt](t7) at (6,0) {};
 \node [draw,circle,fill,inner sep=0pt,minimum size=2pt](t8) at (1,-1) {};
 \path[every node/.style={font=\sffamily\small}]
	(0,0) edge node {} (1,0)     
     (1,0) edge node {} (2,0)
     (2,0) edge node {} (3,0)
     (3,0) edge node {} (4,0)
     (4,0) edge node {} (5,0)
     (1,0) edge node {} (1,-1)
     (5,0) edge node {} (6,0);
 \end{tikzpicture}}
 &&
 
 \raisebox{1pt}{\begin{tikzpicture} [scale=0.3]
 \node [draw,circle,fill,inner sep=0pt,minimum size=2pt](t1) at (0,0) {};
 \node [draw,circle,fill,inner sep=0pt,minimum size=2pt](t2) at (1,0) {};
 \node [draw,circle,fill,inner sep=0pt,minimum size=2pt](t3) at (2,0) {};
 \node [draw,circle,fill,inner sep=0pt,minimum size=2pt](t3) at (3,0) {};
 \node [draw,circle,fill,inner sep=0pt,minimum size=2pt](t1) at (4,0) {};
 \node [draw,circle,fill,inner sep=0pt,minimum size=2pt](t2) at (5,0) {};
 \node [draw,circle,fill,inner sep=0pt,minimum size=2pt](t3) at (6,0) {};
 \node [draw,circle,fill,inner sep=0pt,minimum size=2pt](t3) at (3,-1) {};
 \path[every node/.style={font=\sffamily\small}]
	(0,0) edge node {} (1,0)     
     (1,0) edge node {} (2,0)
     (2,0) edge node {} (3,0)
     (3,0) edge node {} (4,0)
     (4,0) edge node {} (5,0)
     (3,0) edge node {} (3,-1)
     (5,0) edge node {} (6,0);
 \end{tikzpicture}} 
 &&
 
\raisebox{1pt}{\begin{tikzpicture} [scale=0.3]
 \node [draw,circle,fill,inner sep=0pt,minimum size=2pt](t1) at (0,0) {};
 \node [draw,circle,fill,inner sep=0pt,minimum size=2pt](t2) at (1,0) {};
 \node [draw,circle,fill,inner sep=0pt,minimum size=2pt](t3) at (2,0) {};
 \node [draw,circle,fill,inner sep=0pt,minimum size=2pt](t4) at (3,0) {};
 \node [draw,circle,fill,inner sep=0pt,minimum size=2pt](t5) at (4,0) {};
 \node [draw,circle,fill,inner sep=0pt,minimum size=2pt](t6) at (5,0) {};
 \node [draw,circle,fill,inner sep=0pt,minimum size=2pt](t7) at (6,0) {};
 \node [draw,circle,fill,inner sep=0pt,minimum size=2pt](t8) at (4,-1) {};
 \path[every node/.style={font=\sffamily\small}]
	(0,0) edge node {} (1,0)     
     (1,0) edge node {} (2,0)
     (2,0) edge node {} (3,0)
     (3,0) edge node {} (4,0)
     (4,0) edge node {} (5,0)
     (4,0) edge node {} (4,-1)
     (5,0) edge node {} (6,0);
 \end{tikzpicture}}\\
6&&7&&8&&9&&10\\
&&&&&&&&\\
\raisebox{1pt}{\begin{tikzpicture} [scale=0.3]
 \node [draw,circle,fill,inner sep=0pt,minimum size=2pt](t1) at (0,0) {};
 \node [draw,circle,fill,inner sep=0pt,minimum size=2pt](t2) at (1,0) {};
 \node [draw,circle,fill,inner sep=0pt,minimum size=2pt](t3) at (2,0) {};
 \node [draw,circle,fill,inner sep=0pt,minimum size=2pt](t4) at (3,0) {};
 \node [draw,circle,fill,inner sep=0pt,minimum size=2pt](t5) at (4,0) {};
 \node [draw,circle,fill,inner sep=0pt,minimum size=2pt](t6) at (5,0) {};
 \node [draw,circle,fill,inner sep=0pt,minimum size=2pt](t7) at (6,0) {};
 \node [draw,circle,fill,inner sep=0pt,minimum size=2pt](t8) at (1,-1) {};
 \path[every node/.style={font=\sffamily\small}]
	(0,0) edge node {} (1,0)     
     (1,0) edge node {} (2,0)
     (2,0) edge node {} (3,0)
     (4,0) edge node {} (5,0)
     (1,0) edge node {} (1,-1)
     (5,0) edge node {} (6,0);
 \end{tikzpicture}}
&&
\raisebox{1pt}{\begin{tikzpicture} [scale=0.3]
 \node [draw,circle,fill,inner sep=0pt,minimum size=2pt](t1) at (0,0) {};
 \node [draw,circle,fill,inner sep=0pt,minimum size=2pt](t2) at (1,0) {};
 \node [draw,circle,fill,inner sep=0pt,minimum size=2pt](t3) at (2,0) {};
 \node [draw,circle,fill,inner sep=0pt,minimum size=2pt](t4) at (3,0) {};
 \node [draw,circle,fill,inner sep=0pt,minimum size=2pt](t5) at (4,0) {};
 \node [draw,circle,fill,inner sep=0pt,minimum size=2pt](t6) at (5,0) {};
 \node [draw,circle,fill,inner sep=0pt,minimum size=2pt](t7) at (6,0) {};
 \node [draw,circle,fill,inner sep=0pt,minimum size=2pt](t8) at (2,-1) {};
 \path[every node/.style={font=\sffamily\small}]
	(0,0) edge node {} (1,0)     
     (1,0) edge node {} (2,0)
     (2,0) edge node {} (3,0)
     (3,0) edge node {} (4,0)
     (2,0) edge node {} (2,-1)
     (5,0) edge node {} (6,0);
 \end{tikzpicture}}
&&
 \raisebox{1pt}{\begin{tikzpicture} [scale=0.3]
 \node [draw,circle,fill,inner sep=0pt,minimum size=2pt](t1) at (0,0) {};
 \node [draw,circle,fill,inner sep=0pt,minimum size=2pt](t2) at (1,0) {};
 \node [draw,circle,fill,inner sep=0pt,minimum size=2pt](t3) at (2,0) {};
 \node [draw,circle,fill,inner sep=0pt,minimum size=2pt](t3) at (3,0) {};
 \node [draw,circle,fill,inner sep=0pt,minimum size=2pt](t1) at (4,0) {};
 \node [draw,circle,fill,inner sep=0pt,minimum size=2pt](t2) at (5,0) {};
 \node [draw,circle,fill,inner sep=0pt,minimum size=2pt](t3) at (6,0) {};
 \node [draw,circle,fill,inner sep=0pt,minimum size=2pt](t3) at (3,-1) {};
 \path[every node/.style={font=\sffamily\small}]
	(0,0) edge node {} (1,0)     
     (1,0) edge node {} (2,0)
     (2,0) edge node {} (3,0)
     (3,0) edge node {} (4,0)
     (4,0) edge node {} (5,0)
     (3,0) edge node {} (3,-1);
 \end{tikzpicture}}
 &&
\raisebox{1pt}{\begin{tikzpicture} [scale=0.3]
 \node [draw,circle,fill,inner sep=0pt,minimum size=2pt](t1) at (0,0) {};
 \node [draw,circle,fill,inner sep=0pt,minimum size=2pt](t2) at (1,0) {};
 \node [draw,circle,fill,inner sep=0pt,minimum size=2pt](t3) at (2,0) {};
 \node [draw,circle,fill,inner sep=0pt,minimum size=2pt](t3) at (3,0) {};
 \node [draw,circle,fill,inner sep=0pt,minimum size=2pt](t1) at (4,0) {};
 \node [draw,circle,fill,inner sep=0pt,minimum size=2pt](t2) at (5,0) {};
 \node [draw,circle,fill,inner sep=0pt,minimum size=2pt](t3) at (6,0) {};
 \node [draw,circle,fill,inner sep=0pt,minimum size=2pt](t3) at (3,-1) {};
 \path[every node/.style={font=\sffamily\small}]
	(0,0) edge node {} (1,0)     
     (1,0) edge node {} (2,0)
     (2,0) edge node {} (3,0)
     (3,0) edge node {} (4,0)
     (3,0) edge node {} (3,-1);
 \end{tikzpicture}} 
&&
\raisebox{1pt}{\begin{tikzpicture} [scale=0.3]
 \node [draw,circle,fill,inner sep=0pt,minimum size=2pt](t1) at (0,0) {};
 \node [draw,circle,fill,inner sep=0pt,minimum size=2pt](t2) at (1,0) {};
 \node [draw,circle,fill,inner sep=0pt,minimum size=2pt](t3) at (2,0) {};
 \node [draw,circle,fill,inner sep=0pt,minimum size=2pt](t3) at (3,0) {};
 \node [draw,circle,fill,inner sep=0pt,minimum size=2pt](t1) at (4,0) {};
 \node [draw,circle,fill,inner sep=0pt,minimum size=2pt](t2) at (5,0) {};
 \node [draw,circle,fill,inner sep=0pt,minimum size=2pt](t3) at (6,0) {};
 \node [draw,circle,fill,inner sep=0pt,minimum size=2pt](t3) at (1,-1) {};
 \path[every node/.style={font=\sffamily\small}]
	(0,0) edge node {} (1,0)     
     (1,0) edge node {} (2,0)
     (1,0) edge node {} (1,-1);
 \end{tikzpicture}} \\
11&&12&&13&&14&&15\\
&&&&&&&&\\
 \raisebox{1pt}{\begin{tikzpicture} [scale=0.3]
 \node [draw,circle,fill,inner sep=0pt,minimum size=2pt](t1) at (0,0) {};
 \node [draw,circle,fill,inner sep=0pt,minimum size=2pt](t2) at (1,0) {};
 \node [draw,circle,fill,inner sep=0pt,minimum size=2pt](t3) at (2,0) {};
 \node [draw,circle,fill,inner sep=0pt,minimum size=2pt](t3) at (3,0) {};
 \node [draw,circle,fill,inner sep=0pt,minimum size=2pt](t1) at (4,0) {};
 \node [draw,circle,fill,inner sep=0pt,minimum size=2pt](t2) at (5,0) {};
 \node [draw,circle,fill,inner sep=0pt,minimum size=2pt](t3) at (1,-1) {};
 \node [draw,circle,fill,inner sep=0pt,minimum size=2pt](t3) at (4,-1) {};
 \path[every node/.style={font=\sffamily\small}]
	(0,0) edge node {} (1,0)     
     (1,0) edge node {} (2,0)
     (2,0) edge node {} (3,0)
     (1,0) edge node {} (1,-1)
     (4,0) edge node {} (5,0)
     (3,0) edge node {} (4,0)
     (4,0) edge node {} (4,-1);
 \end{tikzpicture}}
&&
\raisebox{1pt}{\begin{tikzpicture} [scale=0.3]
 \node [draw,circle,fill,inner sep=0pt,minimum size=2pt](t1) at (0,0) {};
 \node [draw,circle,fill,inner sep=0pt,minimum size=2pt](t2) at (1,0) {};
 \node [draw,circle,fill,inner sep=0pt,minimum size=2pt](t3) at (2,0) {};
 \node [draw,circle,fill,inner sep=0pt,minimum size=2pt](t4) at (3,0) {};
 \node [draw,circle,fill,inner sep=0pt,minimum size=2pt](t5) at (4,0) {};
 \node [draw,circle,fill,inner sep=0pt,minimum size=2pt](t6) at (5,0) {};
 \node [draw,circle,fill,inner sep=0pt,minimum size=2pt](t7) at (1,-1) {};
 \node [draw,circle,fill,inner sep=0pt,minimum size=2pt](t8) at (3,-1) {};
 \path[every node/.style={font=\sffamily\small}]
	(0,0) edge node {} (1,0)     
     (1,0) edge node {} (2,0)
     (2,0) edge node {} (3,0)
     (3,0) edge node {} (4,0)
     (1,0) edge node {} (1,-1)
     (3,0) edge node {} (3,-1);
 \end{tikzpicture}}
 &&
 \raisebox{1pt}{\begin{tikzpicture} [scale=0.3]
 \node [draw,circle,fill,inner sep=0pt,minimum size=2pt](t1) at (0,0) {};
 \node [draw,circle,fill,inner sep=0pt,minimum size=2pt](t2) at (1,0) {};
 \node [draw,circle,fill,inner sep=0pt,minimum size=2pt](t3) at (2,0) {};
 \node [draw,circle,fill,inner sep=0pt,minimum size=2pt](t3) at (3,0) {};
 \node [draw,circle,fill,inner sep=0pt,minimum size=2pt](t1) at (4,0) {};
 \node [draw,circle,fill,inner sep=0pt,minimum size=2pt](t2) at (5,0) {};
 \node [draw,circle,fill,inner sep=0pt,minimum size=2pt](t3) at (1,-1) {};
 \node [draw,circle,fill,inner sep=0pt,minimum size=2pt](t3) at (2,-1) {};
 \path[every node/.style={font=\sffamily\small}]
	(0,0) edge node {} (1,0)     
     (1,0) edge node {} (2,0)
     (2,0) edge node {} (3,0)
     (1,0) edge node {} (1,-1)
     (2,0) edge node {} (2,-1)
     (4,0) edge node {} (5,0);
 \end{tikzpicture}} 
&&
\raisebox{1pt}{\begin{tikzpicture} [scale=0.3]
 \node [draw,circle,fill,inner sep=0pt,minimum size=2pt](t1) at (0,0) {};
 \node [draw,circle,fill,inner sep=0pt,minimum size=2pt](t2) at (1,0) {};
 \node [draw,circle,fill,inner sep=0pt,minimum size=2pt](t3) at (2,0) {};
 \node [draw,circle,fill,inner sep=0pt,minimum size=2pt](t4) at (3,0) {};
 \node [draw,circle,fill,inner sep=0pt,minimum size=2pt](t5) at (4,0) {};
 \node [draw,circle,fill,inner sep=0pt,minimum size=2pt](t6) at (5,0) {};
 \node [draw,circle,fill,inner sep=0pt,minimum size=2pt](t7) at (2,-1) {};
 \node [draw,circle,fill,inner sep=0pt,minimum size=2pt](t8) at (4,-1) {};
 \path[every node/.style={font=\sffamily\small}]
	(0,0) edge node {} (1,0)     
     (1,0) edge node {} (2,0)
     (2,0) edge node {} (2,-1)
     (3,0) edge node {} (4,0)
     (4,0) edge node {} (5,0)
     (4,0) edge node {} (4,-1);
 \end{tikzpicture}}
&&
\raisebox{1pt}{\begin{tikzpicture} [scale=0.3]
 \node [draw,circle,fill,inner sep=0pt,minimum size=2pt](t1) at (0,0) {};
 \node [draw,circle,fill,inner sep=0pt,minimum size=2pt](t2) at (1,0) {};
 \node [draw,circle,fill,inner sep=0pt,minimum size=2pt](t3) at (2,0) {};
 \node [draw,circle,fill,inner sep=0pt,minimum size=2pt](t4) at (3,0) {};
 \node [draw,circle,fill,inner sep=0pt,minimum size=2pt](t5) at (4,0) {};
 \node [draw,circle,fill,inner sep=0pt,minimum size=2pt](t6) at (5,0) {};
 \node [draw,circle,fill,inner sep=0pt,minimum size=2pt](t7) at (1,-1) {};
 \node [draw,circle,fill,inner sep=0pt,minimum size=2pt](t8) at (2,-1) {};
 \path[every node/.style={font=\sffamily\small}]
	(0,0) edge node {} (1,0)     
     (1,0) edge node {} (2,0)
     (2,0) edge node {} (3,0)
     (1,0) edge node {} (1,-1)
     (2,0) edge node {} (2,-1);
 \end{tikzpicture}}\\
16&&17&&18&&19&&20\\
&&&&&&&&\\
\raisebox{1pt}{\begin{tikzpicture} [scale=0.3]
 \node [draw,circle,fill,inner sep=0pt,minimum size=2pt](t1) at (0,0) {};
 \node [draw,circle,fill,inner sep=0pt,minimum size=2pt](t2) at (1,0) {};
 \node [draw,circle,fill,inner sep=0pt,minimum size=2pt](t3) at (2,0) {};
 \node [draw,circle,fill,inner sep=0pt,minimum size=2pt](t4) at (3,0) {};
 \node [draw,circle,fill,inner sep=0pt,minimum size=2pt](t5) at (4,0) {};
 \node [draw,circle,fill,inner sep=0pt,minimum size=2pt](t6) at (5,0) {};
 \node [draw,circle,fill,inner sep=0pt,minimum size=2pt](t7) at (2,-1) {};
 \node [draw,circle,fill,inner sep=0pt,minimum size=2pt](t8) at (2,-2) {};
 \path[every node/.style={font=\sffamily\small}]
	(0,0) edge node {} (1,0)     
     (1,0) edge node {} (2,0)
     (2,0) edge node {} (3,0)
     (3,0) edge node {} (4,0)
     (2,0) edge node {} (2,-1)
     (2,-1) edge node {} (2,-2);
 \end{tikzpicture}}
&&
\raisebox{1pt}{\begin{tikzpicture} [scale=0.3]
 \node [draw,circle,fill,inner sep=0pt,minimum size=2pt](t1) at (0,0) {};
 \node [draw,circle,fill,inner sep=0pt,minimum size=2pt](t2) at (1,0) {};
 \node [draw,circle,fill,inner sep=0pt,minimum size=2pt](t3) at (2,0) {};
 \node [draw,circle,fill,inner sep=0pt,minimum size=2pt](t3) at (3,0) {};
 \node [draw,circle,fill,inner sep=0pt,minimum size=2pt](t1) at (4,0) {};
 \node [draw,circle,fill,inner sep=0pt,minimum size=2pt](t2) at (5,0) {};
 \node [draw,circle,fill,inner sep=0pt,minimum size=2pt](t3) at (1,1) {};
 \node [draw,circle,fill,inner sep=0pt,minimum size=2pt](t3) at (1,-1) {};
 \path[every node/.style={font=\sffamily\small}]
	(0,0) edge node {} (1,0)     
     (1,0) edge node {} (2,0)
     (1,0) edge node {} (1,-1)
     (1,0) edge node {} (1,1);
 \end{tikzpicture}} 
&&
\raisebox{1pt}{\begin{tikzpicture} [scale=0.3]
 \node [draw,circle,fill,inner sep=0pt,minimum size=2pt](t1) at (0,0) {};
 \node [draw,circle,fill,inner sep=0pt,minimum size=2pt](t2) at (1,0) {};
 \node [draw,circle,fill,inner sep=0pt,minimum size=2pt](t3) at (2,0) {};
 \node [draw,circle,fill,inner sep=0pt,minimum size=2pt](t3) at (3,0) {};
 \node [draw,circle,fill,inner sep=0pt,minimum size=2pt](t1) at (4,0) {};
 \node [draw,circle,fill,inner sep=0pt,minimum size=2pt](t2) at (1,1) {};
 \node [draw,circle,fill,inner sep=0pt,minimum size=2pt](t3) at (1,-1) {};
 \node [draw,circle,fill,inner sep=0pt,minimum size=2pt](t3) at (3,-1) {};
 \path[every node/.style={font=\sffamily\small}]
	(0,0) edge node {} (1,0)     
     (1,0) edge node {} (2,0)
     (2,0) edge node {} (3,0)
     (3,0) edge node {} (4,0)
     (1,0) edge node {} (1,1)
     (1,0) edge node {} (1,-1)
     (3,0) edge node {} (3,-1);
 \end{tikzpicture}}
 &&
 &&
 \\
21&&22&&23&&&&\\
&&&&&&&&\\
 \raisebox{1pt}{\begin{tikzpicture} [scale=0.3]
 \node [draw,circle,fill,inner sep=0pt,minimum size=2pt](t1) at (0,0) {};
 \node [draw,circle,fill,inner sep=0pt,minimum size=2pt](t2) at (1,0) {};
 \node [draw,circle,fill,inner sep=0pt,minimum size=2pt](t3) at (2,0) {};
 \node [draw,circle,fill,inner sep=0pt,minimum size=2pt](t3) at (3,0) {};
 \node [draw,circle,fill,inner sep=0pt,minimum size=2pt](t3) at (4,0) {}; 
 \foreach \x /\alph  in {90/a, 210/b, 330/c}{
  \node[circle,fill,inner sep=0pt,minimum size=2pt,draw,xshift=2cm] (\alph) at (\x:0.8cm) {}; }
  \path[every node/.style={font=\sffamily\small}]
	(0,0) edge node {} (1,0)     
     (1,0) edge node {} (2,0)
     (2,0) edge node {} (3,0)
     (3,0) edge node {} (4,0)
     (a) edge node {} (b)
     (a) edge node {} (c)
     (c) edge node {} (b);
    \end{tikzpicture}}
    &&
 \raisebox{1pt}{\begin{tikzpicture} [scale=0.3]
 \node [draw,circle,fill,inner sep=0pt,minimum size=2pt](t1) at (0,0) {};
 \node [draw,circle,fill,inner sep=0pt,minimum size=2pt](t2) at (1,0) {};
 \node [draw,circle,fill,inner sep=0pt,minimum size=2pt](t3) at (2,0) {};
 \node [draw,circle,fill,inner sep=0pt,minimum size=2pt](t3) at (3,0) {};
 \node [draw,circle,fill,inner sep=0pt,minimum size=2pt](t3) at (4,0) {}; 
 \foreach \x /\alph  in {90/a, 210/b, 330/c}{
  \node[circle,fill,inner sep=0pt,minimum size=2pt,draw,xshift=2cm] (\alph) at (\x:0.8cm) {}; }
  \path[every node/.style={font=\sffamily\small}]
	(0,0) edge node {} (1,0)     
     (1,0) edge node {} (2,0)
     (2,0) edge node {} (3,0)
     (a) edge node {} (b)
     (a) edge node {} (c)
     (c) edge node {} (b);
    \end{tikzpicture}}
&&
\raisebox{1pt}{\begin{tikzpicture} [scale=0.3]
 \node [draw,circle,fill,inner sep=0pt,minimum size=2pt](t1) at (0,0) {};
 \node [draw,circle,fill,inner sep=0pt,minimum size=2pt](t2) at (1,0) {};
 \node [draw,circle,fill,inner sep=0pt,minimum size=2pt](t3) at (2,0) {};
 \node [draw,circle,fill,inner sep=0pt,minimum size=2pt](t3) at (3,0) {};
 \node [draw,circle,fill,inner sep=0pt,minimum size=2pt](t3) at (4,0) {}; 
 \foreach \x /\alph  in {90/a, 210/b, 330/c}{
  \node[circle,fill,inner sep=0pt,minimum size=2pt,draw,xshift=2cm] (\alph) at (\x:0.8cm) {}; }
  \path[every node/.style={font=\sffamily\small}]
	(0,0) edge node {} (1,0)     
     (1,0) edge node {} (2,0)
     (a) edge node {} (b)
     (a) edge node {} (c)
     (c) edge node {} (b);
    \end{tikzpicture}}
&&
 \raisebox{1pt}{\begin{tikzpicture} [scale=0.3]
 \node [draw,circle,fill,inner sep=0pt,minimum size=2pt](t1) at (0,0) {};
 \node [draw,circle,fill,inner sep=0pt,minimum size=2pt](t2) at (1,0) {};
 \node [draw,circle,fill,inner sep=0pt,minimum size=2pt](t3) at (2,0) {};
 \node [draw,circle,fill,inner sep=0pt,minimum size=2pt](t3) at (3,0) {};
 \node [draw,circle,fill,inner sep=0pt,minimum size=2pt](t3) at (4,0) {}; 
 \foreach \x /\alph  in {90/a, 210/b, 330/c}{
  \node[circle,fill,inner sep=0pt,minimum size=2pt,draw,xshift=2cm] (\alph) at (\x:0.8cm) {}; }
  \path[every node/.style={font=\sffamily\small}]
	(0,0) edge node {} (1,0)     
     (2,0) edge node {} (3,0)
     (a) edge node {} (b)
     (a) edge node {} (c)
     (c) edge node {} (b);
    \end{tikzpicture}}
&& 
\raisebox{1pt}{\begin{tikzpicture} [scale=0.3]
 \node [draw,circle,fill,inner sep=0pt,minimum size=2pt](t1) at (0,0) {};
 \node [draw,circle,fill,inner sep=0pt,minimum size=2pt](t2) at (1,0) {};
 \node [draw,circle,fill,inner sep=0pt,minimum size=2pt](t3) at (2,0) {};
 \node [draw,circle,fill,inner sep=0pt,minimum size=2pt](t3) at (3,0) {};
 \node [draw,circle,fill,inner sep=0pt,minimum size=2pt](t3) at (2,-1) {}; 
 \foreach \x /\alph  in {90/a, 210/b, 330/c}{
  \node[circle,fill,inner sep=0pt,minimum size=2pt,draw,xshift=1.5cm] (\alph) at (\x:0.8cm) {}; }
  \path[every node/.style={font=\sffamily\small}]
	(0,0) edge node {} (1,0)     
     (1,0) edge node {} (2,0)
     (2,0) edge node {} (3,0)
      (2,0) edge node {} (2,-1)
     (a) edge node {} (b)
     (a) edge node {} (c)
     (c) edge node {} (b);
    \end{tikzpicture}} \\
24&&25&&26&&27&&28\\
&&&&&&&&\\
\raisebox{1pt}{\begin{tikzpicture} [scale=0.3]
 \node [draw,circle,fill,inner sep=0pt,minimum size=2pt](t1) at (0,0) {};
 \node [draw,circle,fill,inner sep=0pt,minimum size=2pt](t2) at (1,0) {};
  \node [draw,circle,fill,inner sep=0pt,minimum size=2pt](t2) at (2,0) {};
 \node [draw,circle,fill,inner sep=0pt,minimum size=2pt](t3) at (1,1) {};
 \node [draw,circle,fill,inner sep=0pt,minimum size=2pt](t3) at (1,-1) {};
 \foreach \x /\alph  in {90/a, 210/b, 330/c}{
  \node[circle,fill,inner sep=0pt,minimum size=2pt,draw,xshift=1.5cm] (\alph) at (\x:0.8cm) {}; }
  \path[every node/.style={font=\sffamily\small}]
  (0,0) edge node {} (1,0)     
     (1,0) edge node {} (2,0)
     (1,0) edge node {} (1,-1)
     (1,0) edge node {} (1,1)
     (a) edge node {} (b)
     (a) edge node {} (c)
     (c) edge node {} (b);
    \end{tikzpicture}}
    &&
\raisebox{1pt}{\begin{tikzpicture} [scale=0.3]
 \node [draw,circle,fill,inner sep=0pt,minimum size=2pt](t1) at (0,0) {};
 \node [draw,circle,fill,inner sep=0pt,minimum size=2pt](t2) at (1,0) {};
 \node [draw,circle,fill,inner sep=0pt,minimum size=2pt](t3) at (2,0) {};
 \node [draw,circle,fill,inner sep=0pt,minimum size=2pt](t3) at (3,0) {}; 
 \foreach \x /\alph  in {90/a, 0/b, 180/c,  270/d}{
  \node[circle,fill,inner sep=0pt,minimum size=2pt,draw,xshift=1.5cm] (\alph) at (\x:0.8cm) {}; }
  \path[every node/.style={font=\sffamily\small}]
	(0,0) edge node {} (1,0)     
     (1,0) edge node {} (2,0)
     (2,0) edge node {} (3,0)
     (a) edge node {} (b)
     (a) edge node {} (c)
     (c) edge node {} (d)
     (d) edge node {} (b);
    \end{tikzpicture}}
 &&
 \raisebox{1pt}{\begin{tikzpicture} [scale=0.3]
 \node [draw,circle,fill,inner sep=0pt,minimum size=2pt](t1) at (0,0) {};
 \node [draw,circle,fill,inner sep=0pt,minimum size=2pt](t2) at (1,0) {};
 \node [draw,circle,fill,inner sep=0pt,minimum size=2pt](t3) at (2,0) {};
 \node [draw,circle,fill,inner sep=0pt,minimum size=2pt](t3) at (3,0) {}; 
 \foreach \x /\alph  in {90/a, 0/b, 180/c,  270/d}{
  \node[circle,fill,inner sep=0pt,minimum size=2pt,draw,xshift=1.5cm] (\alph) at (\x:0.8cm) {}; }
  \path[every node/.style={font=\sffamily\small}]
	(0,0) edge node {} (1,0)     
     (1,0) edge node {} (2,0)
     (a) edge node {} (b)
     (a) edge node {} (c)
     (c) edge node {} (d)
     (d) edge node {} (b);
    \end{tikzpicture}} 
 &&
\raisebox{1pt}{\begin{tikzpicture} [scale=0.3]
 \node [draw,circle,fill,inner sep=0pt,minimum size=2pt](t1) at (0,0) {};
 \node [draw,circle,fill,inner sep=0pt,minimum size=2pt](t2) at (1,0) {};
 \node [draw,circle,fill,inner sep=0pt,minimum size=2pt](t3) at (2,0) {};
 \node [draw,circle,fill,inner sep=0pt,minimum size=2pt](t3) at (3,0) {}; 
 \foreach \x /\alph  in {90/a, 0/b, 180/c,  270/d}{
  \node[circle,fill,inner sep=0pt,minimum size=2pt,draw,xshift=1.5cm] (\alph) at (\x:0.8cm) {}; }
  \path[every node/.style={font=\sffamily\small}]
	(0,0) edge node {} (1,0)     
     (a) edge node {} (b)
     (a) edge node {} (c)
     (c) edge node {} (d)
     (d) edge node {} (b);
    \end{tikzpicture}}
&&
\raisebox{1pt}{\begin{tikzpicture} [scale=0.3]
 \node [draw,circle,fill,inner sep=0pt,minimum size=2pt](t1) at (0,0) {};
 \node [draw,circle,fill,inner sep=0pt,minimum size=2pt](t2) at (1,0) {};
 \node [draw,circle,fill,inner sep=0pt,minimum size=2pt](t3) at (2,0) {};
 \node [draw,circle,fill,inner sep=0pt,minimum size=2pt](t3) at (3,0) {}; 
 \foreach \x /\alph  in {90/a, 0/b, 180/c,  270/d}{
  \node[circle,fill,inner sep=0pt,minimum size=2pt,draw,xshift=1.5cm] (\alph) at (\x:0.8cm) {}; }
  \path[every node/.style={font=\sffamily\small}]  
     (a) edge node {} (b)
     (a) edge node {} (c)
     (c) edge node {} (d)
     (d) edge node {} (b);
    \end{tikzpicture}}\\
29&&30&&31&&32&&33\\
&&&&&&&&\\
 \raisebox{1pt}{\begin{tikzpicture} [scale=0.3]
 \node [draw,circle,fill,inner sep=0pt,minimum size=2pt](t1) at (0,0) {};
 \node [draw,circle,fill,inner sep=0pt,minimum size=2pt](t2) at (1,0) {};
 \node [draw,circle,fill,inner sep=0pt,minimum size=2pt](t3) at (2,0) {};
 \node [draw,circle,fill,inner sep=0pt,minimum size=2pt](t3) at (1,-1) {}; 
 \foreach \x /\alph  in {90/a, 0/b, 180/c,  270/d}{
  \node[circle,fill,inner sep=0pt,minimum size=2pt,draw,xshift=1.5cm] (\alph) at (\x:0.8cm) {}; }
  \path[every node/.style={font=\sffamily\small}]
	(0,0) edge node {} (1,0)     
     (1,0) edge node {} (2,0)
     (1,0) edge node {} (1,-1)
     (a) edge node {} (b)
     (a) edge node {} (c)
     (c) edge node {} (d)
     (d) edge node {} (b);
    \end{tikzpicture}}
  &&
  \raisebox{1pt}{\begin{tikzpicture} [scale=0.3]
 \node [draw,circle,fill,inner sep=0pt,minimum size=2pt](t1) at (0,0) {};
 \node [draw,circle,fill,inner sep=0pt,minimum size=2pt](t2) at (1,0) {};
 \node [draw,circle,fill,inner sep=0pt,minimum size=2pt](t3) at (2,0) {};
 \foreach \x /\alph  in {18/a, 90/b, 162/c,  234/d, 309/e}{
  \node[circle,fill,inner sep=0pt,minimum size=2pt,draw,xshift=1.5cm] (\alph) at (\x:0.8cm) {}; }
 \path[every node/.style={font=\sffamily\small}]
	(0,0) edge node {} (1,0)
	(1,0) edge node {} (2,0)     
     (b) edge node {} (c)
     (b) edge node {} (a)
     (a) edge node {} (e)
     (d) edge node {} (e)
     (c) edge node {} (d);
 \end{tikzpicture}}
&&
 \raisebox{1pt}{\begin{tikzpicture} [scale=0.3]
 \node [draw,circle,fill,inner sep=0pt,minimum size=2pt](t1) at (0,0) {};
 \node [draw,circle,fill,inner sep=0pt,minimum size=2pt](t2) at (1,0) {};
 \node [draw,circle,fill,inner sep=0pt,minimum size=2pt](t3) at (2,0) {};
 \foreach \x /\alph  in {18/a, 90/b, 162/c,  234/d, 309/e}{
  \node[circle,fill,inner sep=0pt,minimum size=2pt,draw,xshift=1.5cm] (\alph) at (\x:0.8cm) {}; }
 \path[every node/.style={font=\sffamily\small}]
	(0,0) edge node {} (1,0)     
     (b) edge node {} (c)
     (b) edge node {} (a)
     (a) edge node {} (e)
     (d) edge node {} (e)
     (c) edge node {} (d);
 \end{tikzpicture}} 
 &&
 \raisebox{1pt}{\begin{tikzpicture} [scale=0.3]
 \node [draw,circle,fill,inner sep=0pt,minimum size=2pt](t1) at (0,0) {};
 \node [draw,circle,fill,inner sep=0pt,minimum size=2pt](t2) at (1,0) {};
 \foreach \x /\alph  in {90/a, 150/b, 210/c,  270/d, 330/e, 30/f}{
  \node[circle,fill,inner sep=0pt,minimum size=2pt,draw,xshift=1cm] (\alph) at (\x:0.8cm) {}; }
 \path[every node/.style={font=\sffamily\small}]    
 (0,0) edge node {} (1,0)   
     (a) edge node {} (b)
     (b) edge node {} (c)
     (c) edge node {} (d)
     (d) edge node {} (e)
     (e) edge node {} (f)
     (f) edge node {} (a);
 \end{tikzpicture}}
&&
\raisebox{1pt}{\begin{tikzpicture} [scale=0.3]
 \node [draw,circle,fill,inner sep=0pt,minimum size=2pt](t1) at (0,0) {};
 \node [draw,circle,fill,inner sep=0pt,minimum size=2pt](t2) at (1,0) {};
 \foreach \x /\alph  in {90/a, 150/b, 210/c,  270/d, 330/e, 30/f}{
  \node[circle,fill,inner sep=0pt,minimum size=2pt,draw,xshift=1cm] (\alph) at (\x:0.8cm) {}; }
 \path[every node/.style={font=\sffamily\small}]    
     (a) edge node {} (b)
     (b) edge node {} (c)
     (c) edge node {} (d)
     (d) edge node {} (e)
     (e) edge node {} (f)
     (f) edge node {} (a);
 \end{tikzpicture}}\\
34&&35&&36&&37&&38\\
&&&&&&&&\\
\raisebox{1pt}{\begin{tikzpicture} [scale=0.3]
 \node [draw,circle,fill,inner sep=0pt,minimum size=2pt](t1) at (0,0) {};
 \foreach \x /\alph  in {347/a, 41/b, 90/c,  141/d,  193/e,  244/f, 295/g}{
  \node[circle,fill,inner sep=0pt,minimum size=2pt,draw,xshift=1cm] (\alph) at (\x:0.8cm) {}; }
 \path[every node/.style={font=\sffamily\small}]    
     (a) edge node {} (b)
     (b) edge node {} (c)
     (c) edge node {} (d)
     (d) edge node {} (e)
     (e) edge node {} (f)
     (f) edge node {} (g)
     (g) edge node {} (a);
 \end{tikzpicture}} 
 &&
\raisebox{1pt}{\begin{tikzpicture} [scale=0.3]
 \foreach \x /\alph  in {22.5/a, 67.5/b, 112.5/c,  157.5/d,  202.5/e,  247.5/f, 292.5/g, 337.5/h}{
  \node[circle,fill,inner sep=0pt,minimum size=2pt,draw] (\alph) at (\x:0.8cm) {}; }
 \path[every node/.style={font=\sffamily\small}]    
     (a) edge node {} (b)
     (b) edge node {} (c)
     (c) edge node {} (d)
     (d) edge node {} (e)
     (e) edge node {} (f)
     (f) edge node {} (g)
     (g) edge node {} (h)
     (h) edge node {} (a);
 \end{tikzpicture}}
&&
\raisebox{1pt}{\begin{tikzpicture} [scale=0.3]
 \node [draw,circle,fill,inner sep=0pt,minimum size=2pt](t1) at (0,0) {};
 \node [draw,circle,fill,inner sep=0pt,minimum size=2pt](t2) at (1,0) {};
 \foreach \x /\alph  in {90/a, 210/b, 330/c}{
  \node[circle,fill,inner sep=0pt,minimum size=2pt,draw,xshift=1cm] (\alph) at (\x:0.8cm) {}; }
   \foreach \x /\alph  in {90/d, 210/e, 330/f}{
  \node[circle,fill,inner sep=0pt,minimum size=2pt,draw,xshift=2cm] (\alph) at (\x:0.8cm) {}; }
  \path[every node/.style={font=\sffamily\small}]
  (0,0) edge node {} (1,0)
     (a) edge node {} (b)
     (a) edge node {} (c)
     (c) edge node {} (b)
     (d) edge node {} (e)
     (e) edge node {} (f)
     (f) edge node {} (d);
    \end{tikzpicture}}
 &&
 \raisebox{1pt}{\begin{tikzpicture} [scale=0.3]
 \node [draw,circle,fill,inner sep=0pt,minimum size=2pt](t1) at (0,0) {};
 \node [draw,circle,fill,inner sep=0pt,minimum size=2pt](t2) at (1,0) {};
 \foreach \x /\alph  in {90/a, 210/b, 330/c}{
  \node[circle,fill,inner sep=0pt,minimum size=2pt,draw,xshift=2cm] (\alph) at (\x:0.8cm) {}; }
   \foreach \x /\alph  in {90/d, 210/e, 330/f}{
  \node[circle,fill,inner sep=0pt,minimum size=2pt,draw,xshift=1cm] (\alph) at (\x:0.8cm) {}; }
  \path[every node/.style={font=\sffamily\small}]
     (a) edge node {} (b)
     (a) edge node {} (c)
     (c) edge node {} (b)
     (d) edge node {} (e)
     (e) edge node {} (f)
     (f) edge node {} (d);
    \end{tikzpicture}}
    &&
 \raisebox{1pt}{\begin{tikzpicture} [scale=0.3]
 \foreach \x /\alph  in {90/a, 0/b, 180/c,  270/d}{
  \node[circle,fill,inner sep=0pt,minimum size=2pt,draw,xshift=0.75cm] (\alph) at (\x:0.8cm) {}; }
  \foreach \x /\alph  in {90/e, 0/f, 180/g,  270/h}{
  \node[circle,fill,inner sep=0pt,minimum size=2pt,draw,xshift=1.5cm] (\alph) at (\x:0.8cm) {}; }
  \path[every node/.style={font=\sffamily\small}]
     (a) edge node {} (b)
     (a) edge node {} (c)
     (c) edge node {} (d)
     (d) edge node {} (b)
     (e) edge node {} (f)
     (f) edge node {} (h)
     (g) edge node {} (h)
     (g) edge node {} (e);
    \end{tikzpicture}} \\
39&&40,41&&42&&43&&44,45\\
&&&&&&&&\\
 \raisebox{1pt}{\begin{tikzpicture} [scale=0.3]
 \node [draw,circle,fill,inner sep=0pt,minimum size=2pt](t1) at (0,0) {};
  \foreach \x /\alph  in {90/a, 210/b, 330/c}{
  \node[circle,fill,inner sep=0pt,minimum size=2pt,draw,xshift=0.75cm] (\alph) at (\x:0.8cm) {}; }
  \foreach \x /\alph  in {90/e, 0/f, 180/g,  270/h}{
  \node[circle,fill,inner sep=0pt,minimum size=2pt,draw,xshift=1.5cm] (\alph) at (\x:0.8cm) {}; }
  \path[every node/.style={font=\sffamily\small}]
     (a) edge node {} (b)
     (b) edge node {} (c)
     (c) edge node {} (a)
     (e) edge node {} (f)
     (f) edge node {} (h)
     (g) edge node {} (h)
     (g) edge node {} (e);
    \end{tikzpicture}} 
&&
 \raisebox{1pt}{\begin{tikzpicture} [scale=0.3]
 \foreach \x /\alph  in {90/f, 210/g, 330/h}{
  \node[circle,fill,inner sep=0pt,minimum size=2pt,draw] (\alph) at (\x:0.8cm) {}; }
   \foreach \x /\alph  in {18/a, 90/b, 162/c,  234/d, 309/e}{
  \node[circle,fill,inner sep=0pt,minimum size=2pt,draw,xshift=1cm] (\alph) at (\x:0.8cm) {}; }
 \path[every node/.style={font=\sffamily\small}]
     (b) edge node {} (c)
     (b) edge node {} (a)
     (a) edge node {} (e)
     (d) edge node {} (e)
     (c) edge node {} (d)
     (f) edge node {} (g)
     (g) edge node {} (h)
     (h) edge node {} (f);
    \end{tikzpicture}}

&&
  
 &&
  \\
46&&47
 \end{tabular}

\captionof{figure}{Isomorphism types of the cliques in 47 orbits of the set of cliques of size 8 with only edges of weights 1 and 2. All graphs are fully connected subgraphs with edges of weights 2 (the ones that are drawn) and~1 (all other edges). }
\label{table cliques}
\endgroup
\end{center}

\vspace{11pt}

\begin{definition}
We say that a clique in $G$ contains an \textsl{n-gon} if it contains a set of $n$ vertices $e_1,\ldots,e_n$, corresponding to $n$ exceptional curves $c_1,\ldots,c_n$ that intersect with multiplicity 2 for all $\{i,j\}\in\{\{a,a+1\}\colon a\in\{1,\ldots,n-1\}\}\cup\{\{1,n\}\}$, and with multiplicity 1 otherwise. 
\end{definition}

\begin{proposition}\label{prop: n-gon torsion}
If a clique in $G$ contains an $n$-gon corresponding to $n$ exceptional curves that are concurrent in a point $P\in S$, then the point $P_{\E}$ is torsion on its fiber, of order dividing $n$.  
\end{proposition}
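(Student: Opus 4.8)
The plan is to imitate the proof of Theorem~\ref{thmtorsion}, but to exhibit the relevant kernel relation explicitly rather than reading it off Table~\ref{kernels}. Let $e_1,\ldots,e_n$ be the vertices of the $n$-gon, concurrent at $P$, and let $\tilde L_1,\ldots,\tilde L_n$ be the corresponding sections of $\E$. The natural candidate relation is $S:=\tilde L_1+\cdots+\tilde L_n$ in the Mordell--Weil group, and the whole argument reduces to showing that $S=0$ and then specializing this identity to the fiber through $P$.

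First I would translate the combinatorics into the height pairing. By Lemma~\ref{heightpairingisdotproduct} and Remark~\ref{remgammaG} we have $\langle \tilde L_i,\tilde L_i\rangle_h=2$ for every $i$, while for $i\neq j$ one has $\langle \tilde L_i,\tilde L_j\rangle_h=1-e_i\cdot e_j$. Since the $n$ curves are concurrent at $P$, any two of them meet at $P$, so $e_i\cdot e_j\geq 1$ and hence $\langle \tilde L_i,\tilde L_j\rangle_h\leq 0$ for all $i\neq j$; along the $n$ edges of the $n$-gon the intersection multiplicity is exactly $2$, so those pairings equal $-1$.

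The key step is then a single positivity computation. Expanding bilinearly,
\[
\langle S,S\rangle_h=\sum_{i=1}^n\langle \tilde L_i,\tilde L_i\rangle_h+\sum_{i\neq j}\langle \tilde L_i,\tilde L_j\rangle_h=2n+\sum_{i\neq j}\langle \tilde L_i,\tilde L_j\rangle_h.
\]
The $n$ edges of the $n$-gon account for $2n$ ordered pairs, each contributing $-1$, and every remaining ordered pair contributes something $\leq 0$; therefore $\langle S,S\rangle_h\leq 2n-2n=0$. On the other hand the height pairing is positive definite on the Mordell--Weil group of $\E$, which is torsion free \cite[Theorems 8.4 and 10.4]{Shioda_MW}. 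Hence $\langle S,S\rangle_h=0$, so $S$ is torsion, and therefore $S=0$. As a byproduct, positive definiteness forces every non-edge pair among the $n$ curves to contribute exactly $0$, i.e. the $n$-gon has no weight-$2$ chords, though we do not need this.

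Finally I would specialize the relation. Each $\tilde L_i$ is a section through $P_{\E}$, which is a smooth point of its fiber since sections meet the irreducible fibers of $\E$ in smooth points. The specialization map from the Mordell--Weil group to the smooth locus of the fiber through $P$ is a group homomorphism carrying each $\tilde L_i$ to $P_{\E}$, so $S=0$ specializes to $nP_{\E}=0$, and $P_{\E}$ is torsion of order dividing $n$. The only point requiring care---the main obstacle---is the passage from $\langle S,S\rangle_h\leq 0$ to $S=0$: it is exactly here that one must invoke both positive definiteness and the triviality of the torsion subgroup, the two inputs that also drive Theorem~\ref{thmtorsion}.
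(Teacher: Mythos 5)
Your proof is correct and follows essentially the same route as the paper's: compute $\langle S,S\rangle_h$ via Lemma~\ref{heightpairingisdotproduct}, conclude $S=0$ from \cite[Theorems 8.4 and 10.4]{Shioda_MW}, and specialize the relation through the concurrency point to obtain $nP_{\E}=0$. The only difference is a small but welcome refinement: the paper's computation tacitly assumes that non-adjacent pairs in the $n$-gon have height pairing $0$, whereas your inequality $\langle S,S\rangle_h\le 0$, derived from concurrency forcing $e_i\cdot e_j\ge 1$, covers possible weight-$2$ (or $3$) chords and recovers the no-chord statement as a byproduct.
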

\begin{proof}
Let $e_1,\ldots,e_n$ be the $n$ exceptional curves corresponding to an $n$-gon, and Let $L_1,\ldots,L_n$ be the corresponding sections of $\E$. From Lemma \ref{heightpairingisdotproduct} and the bijection in (\ref{eq:bijection C E}) it follows that, in the Mordell-Weil group of $\E$, we have $\left\langle L_i,L_i\right\rangle_h=2$ for all $i\in\{1,\ldots,n\}$, and $\left\langle L_i,L_j\right\rangle_h=-1$ for $\{i,j\}\in\{\{a,a+1\}\colon a\in\{1,\ldots,n-1\}\}\cup\{\{1,n\}\}$, and $\left\langle L_i,L_j\right\rangle_h=0$ otherwise. We find \begin{align*}
\left\langle L_1+\cdots+L_n,L_1+\cdots+L_n\right\rangle_h&=\sum_{i=1}^n\left\langle L_i,L_i\right\rangle_h+\sum_{i=1}^n\left\langle L_i,L_1+\cdots+L_{i-1}+L_{i+1}+\cdots+L_n\right\rangle_h\\
&=2n+n(-2)=0.\end{align*}
Therefore we have that $L_1+\cdots+L_n$ is torsion in the Mordell--Weil group of $\E$ \cite[Theorem 8.4]{Shioda_MW}, and since the torsion subgroup is trivial \cite[Theorem 10.4]{Shioda_MW}, we conclude that $$L_1+\cdots+L_n=0.$$ Specializing the section $L_1+\cdots+L_n$ to the fiber of $P_{\E}$, gives $nP_{\E}=0$. 
\end{proof}

From Proposition \ref{prop: 47 cliques of size 8} it follows that there is only one orbit of cliques of types 7 and 15 in Figure~\ref{table cliques}. Combining this with the following proposition, we can deduce that, in characteristic 0, cliques with those isomorphism types do not correspond to lines that are concurrent in a point on a del Pezzo surface. We do this in the proof of Theorem \ref{thm: case8lines}.

\begin{proposition}\label{propno3cubics}
Let $k$ be a field of characteristic 0. Let $P_1,\ldots,P_8$ be points in general position in $\mathbb{P}_k^2$, such that the lines $L_{1,2}$, $L_{3,4}$, $L_{5,6}$ and $L_{7,8}$ are concurrent in a point $P$. Then the curves in $\mathbb{P}_k^2$ given by $C_{1,2}$, $C_{3,4}$, and $C_{5,6}$ are not concurrent in $P$.
\end{proposition}

\begin{proof}Notice that none of the points $P_1,\ldots,P_8$ equals $P$: otherwise, there exists a subset of three of the $P_i$ such that they are aligned. Moreover, $P$ is not collinear with any two of the three points $P_1,P_3,P_5$, since this together with $P\in L_{1,2}\cap L_{3,4}\cap L_{5,6}$ would also contradict the general position of $P_1,\ldots,P_8$. Thus $P_1,P_3,P_5$ and $P$ are in general position, hence, after applying an automorphism of $\mathbb{P}^2$ if necessary, we may assume that we have $P = (0 : 0 : 1)$, and
$$P_1 =(0:1:1); \;\;\;P_3 =(1:0:1); \;\;\;P_5 =(1:1:1).$$
It follows that $L_{1,2}$ is the line given by $x = 0$, $L_{3,4}$ is the line given by $y = 0$, and $L_{5,6}$ is the line given by $x = y$.
Since $L_{7,8}$ is different from $L_{1,2},\; L_{3,4},\; L_{5,6}$ and contains $P$, there exists an $m\in k\setminus\{0,1\}$ such that $L_{7,8}$ is the line given by $my=x$. Therefore there are $a,b,c,d,e\in k\setminus\{0,1\}$ such that we can write

\begin{align}\label{eq:points}
& P_1 =(0:1:1); & &P_2 =(0:1:a);\\
&P_3=(1: 0: 1); & &P_4 =(1:0:b);\nonumber\\
&P_5=( 1: 1: 1 ); & &P_6 =(1:1:c);\nonumber\\
&P_7 =(m:1:d); & &P_8 =(m:1:e);\nonumber\\
&P\;=(0 : 0 : 1).\nonumber&&
\end{align}
We assume by contradiction that $C_{1,2}$, $C_{3,4}$, and $C_{5,6}$ contain $P$. Let Mon$_3$ be the decreasing sequence of monomials of
degree $3$ in $x, y, z$, ordered lexicographically with $x > y > z$, and for $j\in\{1\ldots 10\}$ let Mon$_i[j]$ be the $j^{\mbox{\tiny{th}}}$ entry of Mon$_i$. Let Mon$^1_i$ and Mon$^3_i$
be the list of derivatives of the entries in Mon$_i$ with respect to $S$ and $z$, respectively. For sequence of points in $\mathbb{P}^2$ given by $R=(R_1,\ldots,R_8)$, let $M_R$ be the matrix $$M_R=\left(c_{i,j}\right)_{i,j\in\{1,\ldots,10\}} \mbox{with } c_{i,j} = 
\left\{
	\begin{array}{ll}
		\mbox{Mon}_3[j](R_i)  & \mbox{for } i\leq 8 \\
		\mbox{Mon}_3^{x}[j](R_8)& \mbox{for } i = 9 \\
		\mbox{Mon}_3^{z}[j](R_8)& \mbox{for } i = 10 
	\end{array}
\right..$$ 
Then the point $P$ is on the cubic $C_{1,2}$ if and only if the determinant of $M_R$ is 0, where $R=(P_3,\ldots,P_8,P_1,P)$ \cite[Lemma 3.4 (iii)]{vLW}. We compute this determinant with \texttt{magma} (see \cite{MagmaCode}) and find $$\mbox{det}(M_R)=-m(m - 1)(d - e)(c - 1)(b - 1)(g_1b+g_2),$$ 
\noindent where $ g_1=cm^2 + de - dm - d - em - e - m^2 + 2m + 1$ and $g_2=-cde + cd + ce - c + 2de - 2d - 2e + 2$. 
    Note that  for each factor of det$(M_R)$ except the last one its vanishing implies  that $P_1,\ldots,P_8$ are not in general position, giving a contradiction. Therefore we find $g_1b+g_2=0$. Now assume that $g_1=0$, then it follows that $g_2=0$. But then we have $0=(c-1)g_1+g_2=(cm - e-m + 1)(cm -d- m + 1),$ and the latter is a product of two equations that each say that three of the points are collinear ($P_1,P_6,P_8$ and $P_1,P_6,P_7$, respectively). It follows that we have $g_1\neq0$ and we can write     \begin{equation}\label{conditiononb}b=-\frac{g_2}{g_1}=-\frac{(-cde + cd + ce - c + 2de - 2d - 2e + 2)}{(cm^2 + de - dm - d - em - e - m^2 + 2m + 1)}.
    \end{equation}
We now repeat this process twice and summarize what we find; for the detailed computations see \cite{MagmaCode}. The cubic $C_{3,4}$ passes through $P$ if and only if det$(M_R)=0$ with $R=(P_1,P_2,P_5\ldots,P_8,P_3,P)$. We factorize the determinant, and find $h_1a+h_2=0$ with $h_1=c + de - dm - d - em - e + m^2 + 2m - 1$ and $h_2= -cde + cdm + cem - cm^2 + 2de - 2dm - 2em + 2m^2$. From  $(c-1)h_1+h_2=(c-d+m-1)(c-e+m-1)$ and the fact that the latter factors imply that $P_3,P_6,P_7$ and $P_3,P_6,P_8$ are collinear, respectively, we conclude that $h_1\neq0$ and we obtain:
    \begin{equation}\label{conditionona}a=-\frac{h_2}{h_1}=-\frac{(-cde + cdm + cem - cm^2 + 2de - 2dm - 2em + 2m^2)}{(c + de - dm - d - em - e + m^2 + 2m - 1)}.
    \end{equation}

Setting the configuration with $a$ and $b$ as in (\ref{conditiononb}) and (\ref{conditionona}), we compute det$(M_R)$ with $R=(P_1,\ldots,P_4,P_7,P_8,P,P_5)$, which vanishes if and only if $P$ is contained in $C_{5,6}$. We obtain $h_1=c + de - dm - d - em - e + m^2 + 2m - 1=0$. But we already showed that this implies that the points are not in general position, giving a contradiction. We conclude that the cubics $C_{1,2},C_{3,4},C_{5,6}$ do not all go through~$P$.
\end{proof}

We are now ready to prove Theorem \ref{thm: case8lines}.

\begin{proofofthm8lines}
Let $e_1,\ldots,e_8$ be 8 exceptional curves that intersect in a point $P$ on~$S$. If there are $i,j\in\{1,\ldots,8\}$ such that $e_i\cdot e_j=3$, then $P$ lies on the ramification curve of the morphism $\varphi$ by \cite[Remark 2.11]{vLW}, hence $P_{\E}$ is torsion on its fiber by Remark \ref{remramcurve}. Assume that all $e_1,\ldots,e_8$ pairwise intersect with multiplicities 1 and 2. From Proposition \ref{prop: 47 cliques of size 8} it follows that the isomorphism type of their weighted intersection graph is one of the 45  different graphs in Figure~\ref{table cliques}. Moreover, from Proposition \ref{prop: n-gon torsion} we conclude that cliques in $G$ with isomorphism types 24-47 in Figure \ref{table cliques} correspond to exceptional curves that, if concurrent, intersect in a point which is torsion on its fiber. Finally, for cliques in $G$ with isomorphism types 9, 16, 17, 18, 20, 21, 22, and 23 in Figure \ref{table cliques}, we check that the Gram matrix of the corresponding sections in the Mordell-Weil group of $\E$ has at least one vector in the kernel whose entries do not sum to 0 \cite{MagmaCode}. Completely analogous to the proof of Theorem \ref{thmtorsion}, it follows that if the exceptional curves corresponding to one of these cliques were concurrent in a point $P$, the point $P_{\E}$ would be torsion on its fiber, since the corresponding sections specialize to a relation $aP_{\E}=0$ for some $a\neq0$. This proves the first part of the theorem. 

For the second part, let $Y$ be a del Pezzo surface of degree 1 over an field $k$ of characteristic 0. Without loss of generality we can assume that $k$ is algebraically closed, and that $Y$ is the blow-up of points $P_1,\ldots,P_8\in\mathbb{P}^2$ in general position. Let $G'$ be the weighted intersection graph of the 240 exceptional curves on $Y$. Let $K_1=\{f_1,\ldots,f_8\}$ be the clique in $G'$ corresponding to the exceptional curves on $Y$ given by the strict transforms of $L_{1,2},L_{3,4},L_{5,6},L_{7,8},C_{1,2},C_{3,4},C_{5,6},C_{7,8}\subset\mathbb{P}^2,$
and $K_2$ the clique corresponding to the strict transforms of 
$L_{1,2},L_{3,4},L_{5,6},L_{7,8},C_{1,2},C_{3,4},C_{5,6},Q_{2,4,7}\subset\mathbb{P}^2.$ All exceptional curves in $K_1$ intersect pairwise with multiplicity 1, so the intersection graph of $K_1$ is equal to number 7 in Figure~\ref{table cliques}. Similarly, all exceptional curves in $K_2$ intersect pairwise with multiplicity 1, except the pairs $\{L_{5,6},Q_{2,4,7}\},\;\{C_{1,2},Q_{2,4,7}\},\;\{C_{3,4},Q_{2,4,7}\}$, which all intersect with multiplicity 2. Therefore, the intersection graph of $K_2$ equals number 15 in Figure \ref{table cliques}. 

Let $e_1,\ldots,e_8$ be exceptional curves on $Y$ that pairwise intersect with multiplicities 1 and 2, and let $C$ be their weighted intersection graph. Assume that $C$ equals number 7 in Figure \ref{table cliques}. By Proposition~\ref{prop: 47 cliques of size 8}, there is only one $W_8$-orbit of cliques of 
size 8 in $G$ with intersection graph equal to number 7. Therefore, after 
permuting the indices if necessary, there is an element $w\in W_8$ such that 
$e_i=w(f_i)$ for $i\in\{1,\ldots,8\}$. Write 
$E_i'=w(E_i)$ for $i$ in $\{1,\ldots,8\}$, where $E_i$ is as in Notation~\ref{notation curves P^2}. Then, since the $E'_i$ are pairwise disjoint, $Y$ is isomorphic to the blow-up of $\mathbb{P}^2$ in points $Q_1,
\ldots,Q_8$ in $\mathbb{P}^2$ such that $E_i'$ is the exceptional curve above $Q_i$ for all $i$ \cite[Lemma 2.4]{vLW}. It follows that, under this blow-up, 
the exceptional curves $e_1,\ldots,e_8$ are the strict transforms of the curves $L_{1,2},L_{3,4},L_{5,6},L_{7,8},C_{1,2},C_{3,4},C_{5,6},C_{7,8}$ in $\mathbb{P}^2$, but now defined with respect to the points $Q_1,\ldots,Q_8$. Assume that the lines $L_{1,2},L_{3,4},L_{5,6},L_{7,8}$ with respect to $Q_1,\ldots,Q_8$ are concurrent in a point $P$. Then by 
Proposition \ref{propno3cubics}, the curves $C_{1,2},C_{3,4},C_{5,6}$ with respect to $Q_1,\ldots,Q_8$ do not go through $P$. We conclude that the exceptional curves in a clique with isomorphism type 7 are not concurrent. The same holds completely analogously for clique number 15. This finishes the proof. \qed
\end{proofofthm8lines}

\bibliographystyle{amsalpha}
\bibliography{bibliography}

\end{document}